\definecolor{red}{rgb}{1,0.2,0.2}
\definecolor{green}{rgb}{0.2,1,0.5}
\definecolor{blue}{rgb}{0,0,1}
\definecolor{lightblue}{rgb}{0.3,0.5,1}
\newcommand{\diag}{\mathrm{diag}}
\newcommand{\card}{\mathrm{card}}
\newcommand{\tr}{\mathrm{tr}}
\newcommand{\ds}{\mathrm{d}}
\newcommand{\0}{\mathbf{0}}
\newcommand{\1}{\mathbf{1}}
\newcommand{\Mn}{\mathbb{R}^{N \times N}}
\newcommand{\RN}{\mathbb{R}^{N}}
\newcommand{\e}{\mathbf{e}}
\newcommand{\T}{^{\mathsf{T}}}
\newcommand{\nT}{^{-\mathsf{T}}} 
\newcommand{\uu}{\mathbf{u}}
\newcommand{\vv}{\mathbf{v}}
\newcommand{\x}{{\mathbf{x}}}
\newcommand{\y}{\mathbf{y}}
\newcommand{\bb}{\mathbf{b}}
\newcommand{\cc}{\mathbf{c}}
\newcommand{\sbold}{\mathbf{s}}
\newcommand{\sB}{\mathbf{s}}
\newcommand{\ub}{\mathbf{u}}
\newcommand{\vb}{\mathbf{v}}
\newcommand{\xx}{{\mathbf{x}}}
\newcommand{\nnb}{\nonumber}
\newtheorem{thm}{\bf Theorem}
\newtheorem{lem}{\bf Lemma}
\newtheorem{asmp}{\bf Assumption}
\newtheorem{cor}[thm]{\bf Corollary}
\theoremstyle{remark}
\newtheorem{rem}{\bf Remark}
\def\BState{\State\hskip-\ALG@thistlm}
\begin{document}
	\title{Optimizing Leader Influence in Networks through Selection of Direct Followers}
	
	\author{Van Sy Mai and Eyad H. Abed
		\thanks{Van Sy Mai and Eyad H. Abed are with the Department of Electrical and Computer Engineering and the Institute for Systems Research, 
			University of Maryland, College Park, MD 20742, USA. 
			{\tt\small \{vsmai,abed\}@umd.edu}}%
	}
	

	\hyphenation{op-tical net-works semi-conduc-tor}

	\maketitle
	
	\begin{abstract}
		The paper considers the problem of a leader that seeks to optimally influence the opinions of agents in a directed network through connecting with a limited number of the agents (``direct followers''), possibly in the presence of a fixed competing leader. The settings involving a single leader and two competing leaders are unified into a general combinatoric optimization problem, for which two heuristic approaches are developed. The first approach is based on a convex relaxation scheme, possibly in combination with the $\ell_1$-norm 
		regularization technique, and the second is based on a greedy selection strategy. The main technical novelties of this work are in the establishment of supermodularity of the objective function and convexity of its continuous relaxation. The greedy approach is guaranteed to have a lower bound on the approximation ratio sharper than $(1-1/e)$, while the convex approach can benefit from efficient (customized) numerical solvers to have practically comparable solutions possibly with faster computation times. The two approaches can be combined to provide improved results. In numerical examples, the approximation ratio can be made to reach $90\%$ or higher depending on the number of direct followers.

	\end{abstract}
	

	%
	\IEEEpeerreviewmaketitle

	\section{Introduction}
	
	This paper revisits problems related to a leader seeking to influence the opinions of agents in a strongly connected network through connecting with a limited number of the agents, referred to here as ``direct followers.'' The leader has a constant opinion and aims to achieve maximum influence on the agents' opinions, which evolve through iterative updating as a  weighted average of the opinions of their neighbors. Our main contributions include:
	\begin{itemize}
		\item \emph{Defining alternative and more practical influence maximization problems in which the network is directed and weighted}, and direct followers' opinions are not anchored but follow dynamics. We consider two situations: (i) there is a single leader, and (ii) the preferred leader operates in the presence of a competing leader. We formulate influence maximization problems for these scenarios, each involving a tailored measure of performance of the preferred leader in influencing the network agents. In case (i), we seek to minimize the cumulative distance of agent opinions from that of the preferred leader, while in case (ii) we focus on the steady state distance.
		
		\item 
		\emph{Unifying the two problems into a common mathematical framework and providing practical solution methods}. 
		We embed these two influence optimization problems in a single, general combinatoric optimization problem. We develop and analyze \emph{convexity heuristics} and \emph{greedy algorithms} for the unified problem; these can be treated effectively by available numerical algorithms.
	\end{itemize}

	This paper is related to a large body of literature on problems of leader selection and stubborn agent placement (see, e.g., \cite{ Patterson10CDC, Lin11, Fardad13, Ghaderi13, Gionis13, Yildiz13ACM, Clark14AC, Vassio14, Borkar15} and references therein) 
	but departs from this literature in several key aspects. First, we only ask that the underlying network be directed and strongly connected. Second, we allow  selected direct follower nodes to follow inter-agent dynamics like other agents, rather than instantaneously anchoring their opinions to that of the leader.  
	Third, we allow the agents in the network to have different initial opinions (which are taken into account explicitly in the case of having one leader), and the agents can be assigned differing weights by the leader. 
	Finally, and more importantly, although continuous relaxation and greedy heuristics have been employed in influence maximization problems,  our theoretical results on convexity and supermodularity are considerably stronger than existing results, without assuming any symmetry or resorting to the random walk theory. This not only provides a deeper understanding of diffusive processes but also can be used for a broader range of applications. 
	We compare our results with related work as appropriate.


	The remainder of the paper proceeds as follows. 
	In Section \ref{secPropblemFormulation}, we introduce our network models and associated optimization problems of interest. 
	Related work is also reviewed. 
	Our main results are given in Sections \ref{secCVX} and \ref{secSupermodularity}. In Section \ref{secCVX}, we establish the convexity of the relaxed and approximate problems and discuss associated numerical issues in applying convex solvers to these problems. In Section \ref{secSupermodularity}, we prove the supermodularity property of the original objective functions and present two greedy algorithms that admit provable approximation ratios. 
	Finally, a few simulation results are reported in Section \ref{secSimulations}. 
	
	\textit{Notation and terminology: } 
	The real part of a complex number $x$ is denoted $\Re(x)$. 
	Vectors are denoted by bold lower case letters, e.g., 
	$\x = [x_1, x_2, \ldots, x_n]\T\in \mathbb{R}^n$ 
	and $\1 := [1,1,\ldots,1]\T$. 
	For any $\x\in \mathbb{R}^n$, $| \x | \in \mathbb{R}^n$ is such that its $i$th element is $|x_{i}|$, $\card(\x)$ denotes the number of nonzero elements of $\x$,  and $\diag(\x) \in \mathbb{R}^{n\times n}$ is the diagonal matrix with $[\diag(\xx)]_{ii} = x_i, i=1,\ldots, n$. 
	For a matrix $A$, $\|A\|_p$ denotes its $p$-norm, 
	$\rho(A)$ its spectral radius, $\sigma(A)$ its spectrum, 
	and $A_{(i)}$ and $A^{(j)}$ the $i$-th column and $j$-th row of $A$, respectively. 
	Any of $[A]_{ij}$, $A_{ij}$ and $a_{ij}$ can be used to indicate the $ij$-th element of $A$. 
	The identity and zero matrices are denoted by $I$ and $\0$, respectively (dimensions will be clear from the context). 
	%
	%
	We write $A\!\ge\! B$ when $A-B$ is a nonnegative matrix.
	A square matrix $A$ is {\it substochastic} if $A\!\ge\! \0$ and $A\1\!\le\!\1$. 
	A square matrix $A$ is an {\it M-matrix} if $a_{ij} \!\le\! 0, \forall i\neq j$ and $A \!=\! sI-B$ for some $s>0$ and $B\ge \0$ satisfying $\rho(B) \le s$. 
	
	Sets are denoted by calligraphic upper case letters. 
	For a set $\mathcal{A}$, $|\mathcal{A}|$ denotes its cardinality. 
	A {\it directed graph} $\mathcal{G} \!=\! (\mathcal{V}, \mathcal{E})$ consists of a finite set of nodes (also called agents in this work) $\mathcal{V} = \{1,2,\ldots,N\}$ and a set $\mathcal{E} \subseteq \mathcal{V}\times \mathcal{V}$ of edges, where $(i,j) \in \mathcal{E}$ is an ordered pair indicating that agent $i$ can obtain information directly from agent $j$. A {\it directed path} is a sequence of edges in the form $(i_1, i_2), (i_2, i_3),\ldots, (i_{k-1}, i_k)$. 
	Node $j$ is said to be {\it reachable from node} $i$ if there is a directed path from $i$ to $j$. Each node is reachable from itself. The graph $\mathcal{G}$ is {\it strongly connected} if each node is reachable from any other node. 
	
	Let $\mathcal{V}$ be a finite set. A function $f:2^\mathcal{V}\to \mathbb{R}$ is {\it supermodular} if $f(\mathcal{S}) - f(\mathcal{S} \cup \{v\}) \geq f(\mathcal{T}) - f(\mathcal{T} \cup \{v\})$ for any $\mathcal{S} \subseteq \mathcal{T} \subseteq \mathcal{V}$ and $\forall v\in \mathcal{V}{\setminus} \mathcal{T}$; we say that $f$ is {\it submodular} if $-f$ is supermodular. 
	
	A differentiable function $f$ is called {\it strongly convex with parameter} $\mu>0$ {\it on a convex set} $\Omega$ if for any $ \x,\y\in \Omega$, $f(\y) \ge f(\x) + \nabla f(\x)\T(\y-\x) + \frac{\mu}{2}\|\y-\x\|^2$, 
	where $\nabla f$ denotes the gradient. If $\mu \ge 0$ in this relation, the function $f$ is {\it convex}. 
	
	\vspace{-2mm}
	\section{Problem Formulation and Related Work}\label{secPropblemFormulation}
	
	\subsection{Opinion Dynamics Model and Assumptions}\label{subsecDeGrootModel}
	Consider a network of $N$ agents, characterized by a graph $\mathcal{G} = (\mathcal{V}, \mathcal{E})$, in the presence of two leaders with different opinions $T$ and $Q$; we refer to $T$ and $Q$ as competing leaders. 
	Let $\mathcal{K}, \mathcal{L} \subseteq \mathcal{V}$ denote the sets of direct followers of $T$ and $Q$, respectively. 
	Let $x_i(t) \in [0,1]$ denote the opinion of node $i$ at time $t \ge 0$. 
	Each node $i$ has two potential trust levels $\alpha_i, \beta_i\in [0,\infty]$ (at least one of them is finite) and updates its opinion according to
	\begin{align}
	{x}_i(t+1) = \displaystyle\frac{[\sbold_{\mathcal{K}}]_i\alpha_i T + [\sbold_{\mathcal{L}}]_i\beta_i Q+ \sum_{j\in \mathcal{N}_i} w_{ij}x_j(t)}{[\sbold_{\mathcal{K}}]_i\alpha_i+[\sbold_{\mathcal{L}}]_i\beta_i+ \sum_{j\in \mathcal{N}_i} w_{ij}},
	\label{eqModel2Leader}
	\end{align}
	where $w_{ij} >0$ represents the weight agent $i$ puts on agent $j$, $\mathcal{N}_i$ the set of agent $i$'s neighbors, and $\sbold_{\mathcal{K}}$ and $\sbold_{\mathcal{L}}$ denote the selection vectors of $T$ and $Q$, respectively, i.e., $[\sbold_{\mathcal{K}}]_i=1$ if $i\in \mathcal{K}$ and $0$ otherwise. 
	In our context, $\boldsymbol{\alpha}$ and $\boldsymbol{\beta}$ are associated with the agents and are  assumed to be fixed over time. This model is a special case of the Friedkin model \cite{Friedkin15CS} and reduces to the DeGroot model \cite{DeGroot74} (see also \cite{Jadbabaie03,Olfati07,Ren07,Arcak07}) when $\sbold_{\mathcal{K}} = \sbold_{\mathcal{L}} = 0$, i.e., both $T$ and $Q$ are inactive.  	In this paper, our interest is in maximizing the influence of leader $T$ on the opinions of the network agents, through selection of up to $K$ direct followers. 
	We make the following blanket assumptions:
	
	\begin{asmp} \label{asmp_fixedGraph} 
		The graph $\mathcal{G}$ is strongly connected and the weight matrix $W = [w_{ij}]$ is such that for $i\neq j$,  
		$w_{ij}>0$ if $(i,j)\in \mathcal{E}$ and $w_{ij}=0$ otherwise. Moreover, $w_{ii}>0$ for some $i\in \mathcal{V}$. 
	\end{asmp}

	\begin{asmp} \label{asmp_alpha} 
		$\mathcal{K} \cap \mathcal{V}_{\alpha} \neq \varnothing$ where $\mathcal{V}_{\alpha} := \{i\in \mathcal{V}~|~  \alpha_i > 0 \}$. 
	\end{asmp}
	
	Assumption \ref{asmp_alpha} involves no loss of generality; it simply requires that at least one direct follower has a nonzero trust level in $T$.
	
	We consider two cases: either $T$ is the only active leader ($\sB_{\mathcal{L}} \!=\! \0$), or the second leader $Q$ is also present and active  ($\sB_{\mathcal{L}} \!\neq\!\0$). In the latter case,  leader $Q$ has chosen its direct followers, and this choice is known. In this case, since both $T$ and $Q$ are active, the agents' opinions do not reach consensus, while they do reach consensus in the case of a single active leader $T$. Because of the differing network behaviors, we will use different measures of influence for leader $T$ for the two cases. Although we use two different performance measures for the two cases considered, mathematically the two problems can be cast as instances of a single problem. This unified mathematical problem is best introduced after deriving the individual optimization problems for the two cases and then comparing the formulations.
	
	\vspace{-2mm}
	\subsection{Influence Optimization for the Single Leader Case}\label{subsecWithOneLeader}
	Letting $\sB_{\mathcal{L}} = \0$, the model reduces to
	\begin{align}
	{x}_i(t+1) = 
	\frac{[\sbold_{\mathcal{K}}]_i\alpha_i T+ \sum_{j\in \mathcal{N}_i}w_{ij}x_j(t)}{[\sbold_{\mathcal{K}}]_i\alpha_i+ \sum_{j\in \mathcal{N}_i} w_{ij}}.
	\label{eqModel1Leader}
	\end{align} 
	The following well known result (see, e.g., \cite{Jadbabaie03,Ghaderi13,ParsegovAC17}) asserts that all network agents will adopt the leader's opinion asymptotically, regardless of their initial opinions.
	
	\begin{thm} \label{thm_consensus} 
		Under Assumptions \ref{asmp_fixedGraph} 
		and \ref{asmp_alpha}, 
		$\lim_{t\to \infty} x_i(t) = T, \forall i\in \mathcal{V}, \x(0) \in [0,1]^N$. Moreover, the rate of convergence is linear. 
	\end{thm}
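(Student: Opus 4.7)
I would first rewrite the iteration (\ref{eqModel1Leader}) in matrix form. Set $d_i := [\sbold_{\mathcal{K}}]_i\alpha_i + \sum_{j\in\mathcal{N}_i} w_{ij}$; each $d_i>0$ because strong connectivity of $\mathcal{G}$ ensures every node has at least one in-neighbor. Define the nonnegative matrix $P$ with entries $P_{ij}=w_{ij}/d_i$ and the vector $\bb$ with $b_i=[\sbold_{\mathcal{K}}]_i\alpha_i T/d_i$. Then (\ref{eqModel1Leader}) reads $\x(t+1)=P\x(t)+\bb$; by construction $P\1\le\1$ (so $P$ is substochastic) and $P(T\1)+\bb=T\1$, i.e., $T\1$ is a fixed point of the iteration.

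Introducing the error $\e(t):=\x(t)-T\1$ yields the linear recursion $\e(t+1)=P\e(t)$, so $\e(t)=P^{t}\e(0)$ and the theorem reduces to showing $\rho(P)<1$: once this is established, standard spectral estimates give $\|\e(t)\|\le C\,\rho(P)^{t}\|\e(0)\|$ for some $C>0$, which is precisely linear convergence. To prove $\rho(P)<1$, I would exploit two structural facts. First, the off-diagonal positivity pattern of $P$ coincides with the edge set $\mathcal{E}$, so Assumption \ref{asmp_fixedGraph} implies $P$ is an irreducible nonnegative matrix. Second, Assumption \ref{asmp_alpha} supplies some $i^\star\in\mathcal{K}\cap\mathcal{V}_\alpha$ with $[\sbold_{\mathcal{K}}]_{i^\star}\alpha_{i^\star}>0$, so the $i^\star$-th row sum of $P$ is strictly less than $1$. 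By Perron-Frobenius, $\rho(P)$ is a simple eigenvalue admitting a strictly positive left eigenvector $\uu>\0$. Writing $\uu\T P\1=\rho(P)\,\uu\T\1$ on one hand, and using $\uu>\0$ together with $P\1\le\1$ (strict at $i^\star$) to get $\uu\T P\1<\uu\T\1$ on the other, forces $\rho(P)<1$.

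The main obstacle is the last Perron-Frobenius step, which depends on two points that must be verified carefully: (i) $P$ inherits irreducibility from the strong connectivity of $\mathcal{G}$, since dividing by $d_i$ only rescales rows and preserves the off-diagonal zero pattern; and (ii) a single row-sum deficit localized at $i^\star$ genuinely propagates to a spectral-radius deficit, a propagation that relies crucially on the positivity of the left Perron eigenvector. Once $\rho(P)<1$ is secured, the convergence $\x(t)\to T\1$ and its linear rate follow immediately for every $\x(0)\in[0,1]^N$, completing the argument.
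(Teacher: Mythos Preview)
Your proposal is correct and mirrors the paper's approach: the paper treats Theorem~\ref{thm_consensus} as well known and records the key spectral bound $\rho(A)<1$ for $A=(\diag(W\1+\boldsymbol\alpha_{\mathcal K}))^{-1}W$ (your matrix $P$) in Lemma~\ref{lemStability}(i), arguing exactly as you do that $A$ is irreducible, substochastic, and has a strictly deficient row sum---there citing Minc's book rather than writing out the Perron--Frobenius left-eigenvector step explicitly. The only minor imprecision is the bound $\|\e(t)\|\le C\rho(P)^{t}\|\e(0)\|$, which in general should read $C_r\,r^{t}$ for any fixed $r\in(\rho(P),1)$ when $P$ is not diagonalizable; this does not affect the linear-convergence conclusion.
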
 
	
	
	Although having no role in the final consensus value, $\x(0)$ and  $\mathcal{K}$ clearly affect the manner in which the agents approach this agreement. 
	To examine this dynamic behavior, we consider the transient error $\boldsymbol\xi(t) := \boldsymbol\x(t)-T\mathbf{1}$, which by \eqref{eqModel1Leader} satisfies
	\begin{align}
	\boldsymbol\xi(t+1) &= A \boldsymbol\xi(t),\quad A := (\diag(W\1 + \boldsymbol\alpha_{\mathcal{K}}))^{-1}W, \label{eqError1Leader}
	\end{align} 
	where $\boldsymbol\alpha_{\mathcal{K}} {=} \sB_{\mathcal{K}} {\diamond} \boldsymbol\alpha$ and $\diamond$ denotes the element-wise product (also known as the Hadamard product). 
	Asymptotically approaching consensus is equivalent to global exponential stability of the origin for \eqref{eqError1Leader}, since \eqref{eqError1Leader} is linear and time-invariant. 
	In fact, we have the following spectral properties of $A$ and a related matrix, the (weighted) Laplacian matrix $L := \diag(W\1) - W$. 
	\begin{lem} \label{lemStability}
		\emph{(Spectrum)}	
		If Assumptions \ref{asmp_fixedGraph} and \ref{asmp_alpha} hold, then 
		\begin{itemize}
			\item[(i)] $\rho\big( (\diag(W\1 + \boldsymbol\alpha_{\mathcal{K}}))^{-1}W \big) < 1$, and 
			
			\item[(ii)] 
			$\forall \lambda \in \sigma\big(L+\diag(\boldsymbol\alpha_{\mathcal{K}}) \big), \Re(\lambda) > 0$.
		\end{itemize} 
	\end{lem}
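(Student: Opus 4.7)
My plan is to handle both parts in parallel by exploiting the common decomposition $D := \diag(W\1 + \boldsymbol\alpha_{\mathcal{K}})$, $A := D^{-1}W$, and $M := L + \diag(\boldsymbol\alpha_{\mathcal{K}}) = D - W$. Strong connectivity (Assumption \ref{asmp_fixedGraph}) forces $(W\1)_{i}>0$ for every $i$, so $D$ is a positive diagonal matrix and is therefore invertible; moreover, $A$ and $M$ inherit the off-diagonal nonzero pattern of $W$, which is irreducible because $\mathcal{G}$ is strongly connected.

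For part (i), I would verify that $A\ge \0$ and $A\1\le \1$ (so $A$ is substochastic), with strict inequality $(A\1)_{i}<1$ for any $i\in \mathcal{K}\cap \mathcal{V}_{\alpha}$; such an index exists by Assumption \ref{asmp_alpha}. By Perron--Frobenius applied to the irreducible nonnegative matrix $A$, there is a strictly positive left eigenvector $\boldsymbol\pi>\0$ with $\boldsymbol\pi\T A = \rho(A)\,\boldsymbol\pi\T$. Assuming $\rho(A)=1$ would give $\boldsymbol\pi\T A\1=\boldsymbol\pi\T\1$, contradicting $A\1\le\1$ combined with a strict deficit at one coordinate and the positivity of $\boldsymbol\pi$. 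Hence $\rho(A)<1$.

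For part (ii), I would use the row structure of $M$: $M_{ii}=\sum_{j\neq i}w_{ij}+[\boldsymbol\alpha_{\mathcal{K}}]_{i}$ and $|M_{ij}|=w_{ij}$ for $j\neq i$, so the Gershgorin radius of row $i$ satisfies $R_{i}=M_{ii}-[\boldsymbol\alpha_{\mathcal{K}}]_{i}$; every Gershgorin disk $D_i$ is thus contained in the closed right half-plane, and for any $i^{\star}\in \mathcal{K}\cap \mathcal{V}_{\alpha}$ the disk $D_{i^{\star}}$ lies strictly inside the open right half-plane. Standard Gershgorin already yields $\Re(\lambda)\ge 0$. To promote this to $\Re(\lambda)>0$, I would invoke Taussky's theorem for irreducible matrices: any eigenvalue on the boundary of the Gershgorin set must satisfy $|\lambda-M_{ii}|=R_{i}$ for \emph{every} $i$. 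If $\Re(\lambda)=0$ then $\lambda$ lies on the boundary of $\bigcup_{i}D_{i}\subseteq\{\Re(z)\ge 0\}$; applying Taussky with $i=i^{\star}$ then forces $\Re(\lambda)\ge M_{i^\star i^\star}-R_{i^\star}=[\boldsymbol\alpha_{\mathcal{K}}]_{i^{\star}}>0$, a contradiction.

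The principal obstacle in both parts is the same: the elementary bounds (substochasticity in (i), Gershgorin in (ii)) give only the weak inequalities $\rho(A)\le 1$ and $\Re(\lambda)\ge 0$. The essential step is promoting each to a strict inequality, and in both cases this is achieved by combining irreducibility from Assumption \ref{asmp_fixedGraph} with the single-node trust condition of Assumption \ref{asmp_alpha}---via the strictly positive left Perron vector in part (i) and via Taussky's refinement of Gershgorin in part (ii). Without Assumption \ref{asmp_alpha}, one cannot rule out $\rho(A)=1$ or purely imaginary eigenvalues of $M$.
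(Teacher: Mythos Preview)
Your proposal is correct and follows essentially the same approach as the paper. The paper establishes (i) by noting that $A$ is irreducible and substochastic with at least one strict row-sum deficit and then citing \cite[Thm.~1.1, p.~24]{Minc88}; your left Perron--vector contradiction is precisely the standard proof of that cited result. For (ii) the paper invokes the Gershgorin Circle Theorem together with \cite[Cor.~6.2.9, p.~356]{Horn85}, which is exactly Taussky's boundary-eigenvalue refinement for irreducible matrices that you use; your write-up simply unpacks that citation.
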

	\begin{proof}
		Clearly, $A \!=\! (\diag(W\1 \!+\! \boldsymbol\alpha_{\mathcal{K}}))^{-1}W$ is irreducible (i.e., graph associated with $A$ is strongly connected) and substochastic with at least one row sum less than 1. It follows from \cite[Thm. 1.1, p. 24]{Minc88} that $\rho(A) {<}1$. Part (ii) follows from  the Gershgorin Circle Theorem \cite[p. 344]{Horn85} and \cite[Cor.~6.2.9, p. 356]{Horn85}.%
	\end{proof}
	

	We are interested in the total convergence error defined as $$\textstyle J^{total}_\mathcal{K} =  \sum_{i\in \mathcal{V}}b_i\sum_{t=1}^{\infty}|\xi_i(t)|,$$ 
	where $\bb {=} [b_1,..., b_N]\T \!\ge\! \0$ is a weight vector satisfying $\1\T\bb \!=\! 1$. We assume that $\bb$ is fixed, chosen a priori by the leader, to reflect the leader's relative preferences for the various available agents. 
	When $\bb{=}\1/N$ and $\boldsymbol{\xi}(0) {=} -\1$, 
	$J^{total}_\mathcal{K}$ is similar to error measures considered in, e.g.,  \cite{Fardad13,Clark14AC,Borkar15}. 
	%
	However, since computing $J^{total}_{\mathcal{K}}$ for any $\boldsymbol{\xi}(0) \!=\! \boldsymbol{\xi}_0$ is nontrivial, we employ a tight upper bound $J_{\mathcal{K}}^{(1)}$ obtained as follows: 
	\begin{align} 
	J^{total}_{\mathcal{K}}  
	&\!=\! \textstyle \sum_{t\ge 1} \bb\T |\boldsymbol\xi (t)| \stackrel{\eqref{eqError1Leader}}{=} \textstyle \bb\T\! \sum_{t\ge 0} | A^t\boldsymbol\xi (1)| \le \textstyle \bb\T\! \sum_{t\ge 0} A^t |\boldsymbol\xi (1)| \nnb\\
	&\!\!\!\!\!\stackrel{(\text{Lem.~\ref{lemStability}})}{=}  
	\bb\T(I - A)^{-1} |\boldsymbol\xi(1)|\nnb\\ 
	&=  \bb\T(\diag(W\1 \!+\! \boldsymbol\alpha_{\mathcal{K}}) \!-\! W)^{-1} \diag(W\1 \!+\! \boldsymbol\alpha_{\mathcal{K}})|\boldsymbol\xi(1)| \nnb\\
	&\le \bb\T(L+ \diag(\boldsymbol\alpha_{\mathcal{K}}))^{-1} |W\boldsymbol\xi_0| ~=: J_{\mathcal{K}}^{(1)}.  \label{eqJK1d}
	\end{align}
	Here the last inequality holds since the inverse $(L+ \diag(\boldsymbol\alpha_{\mathcal{K}}))^{-1}$ exists (cf. Lemma \ref{lemStability}(ii)) and  $|\boldsymbol\xi(1)| \le (\diag(W\1 + \boldsymbol\alpha_{\mathcal{K}}))^{-1}|W\boldsymbol\xi_0|.$ 
	It can be verified that $J_{\mathcal{K}}^{(1)} = J^{total}$ if either $\boldsymbol\xi_{0} \ge \0$ or $\boldsymbol\xi_{0} \le \0$, 
	i.e., if $T \ge \max_i x_i(0)$ or $T \le \min_i x_i(0)$ 
	(this is the case when viewing $T$ as a new idea or product that is being promoted). 
	Thus $J_{\mathcal{K}}^{(1)}$ can be regarded as a cost associated with leader $T$ during the transient process (i.e., prior to the whole network adopting opinion $T$).   
	We use $J_{\mathcal{K}}^{(1)}$ to measure the influence of $T$ and consider:
	\begin{equation} \label{eqMinJK1}
	\quad\textrm{(P1)} \quad
	\displaystyle{ \min_{\mathcal{K} \subset \mathcal{V}, |\mathcal{K}| \le K}} \quad J_{\mathcal{K}}^{(1)} = \bb\T(L+ \diag(\boldsymbol\alpha_{\mathcal{K}}))^{-1} |W\boldsymbol\xi_0|.
	\end{equation}
	

	\vspace{-5mm}
	\subsection{Influence Optimization in the Presence of a Competing Leader}\label{subsecWithTwoLeaders}
	Next, consider the case  $\sB_{\mathcal{L}} \neq \0$, i.e., leader $T$ competes against leader $Q$. Without loss of generality, suppose $\boldsymbol\beta\neq \0$ is fixed and $\sbold_{\mathcal{L}} = \1$ (i.e., the direct followers of $Q$ are fixed and known). 
	In this case, it is well known that the agents' opinions do not reach consensus but converge to a limiting opinion vector ${\x}(\infty)$ satisfying  
	${\x}(\infty) = (\diag(W\1 + \boldsymbol\alpha_{\mathcal{K}}+ \boldsymbol\beta))^{-1}(\boldsymbol\beta Q + \boldsymbol{\alpha}_{\mathcal{K}}T+ W \x(\infty))$. 
	Thus, 
	\begin{equation}
	\x({\infty})  = (L_\beta + \diag(\boldsymbol\alpha_{\mathcal{K}}) )^{-1}(\boldsymbol\beta Q + \boldsymbol{\alpha}_{\mathcal{K}}T),  \nnb
	\end{equation}
	where $L_{\beta} {:=} L {+} \diag(\boldsymbol{\beta})$, which 
	is invertible under the strong connectivity assumption and the condition that $\boldsymbol{\alpha}_{\mathcal{K}} \neq \0$ and $\boldsymbol{\beta} \neq \0$ (cf. Lemma \ref{lemStability}-ii). 
	We are interested in the steady state error vector $\boldsymbol\xi(\infty) := \boldsymbol\x(\infty)-T\1$. Since $(L_\beta + \diag(\boldsymbol\alpha_{\mathcal{K}}) )^{-1}(\boldsymbol{\beta}+\boldsymbol\alpha_{\mathcal{K}}) = \1$, it can be verified that 
	$\boldsymbol\xi({\infty})  = (L_\beta + \diag(\boldsymbol\alpha_{\mathcal{K}}) )^{-1}\boldsymbol\beta (Q-T)$. 
	To quantify the long term effect of $T$ in the presence of $Q$, we define 
	$$ J_{\mathcal{K}}^{(2)} := \bb\T |\boldsymbol\xi({\infty})|,$$ 
	where $\bb \!\ge\! \0$ is a preference vector. 
	Since $(L_\beta + \diag(\boldsymbol\alpha_{\mathcal{K}}) )$ is a nonsingular M-matrix, we have $(L_\beta + \diag(\boldsymbol\alpha_{\mathcal{K}}) )^{-1} \ge \0$  (cf. Lemma~\ref{thm_inverseMmatrix} in Appendix~\ref{secMatrixResults}). Thus
	$ J_{\mathcal{K}}^{(2)} \!=\! \bb\T (L_\beta \!+\! \diag(\boldsymbol\alpha_{\mathcal{K}}) )^{-1}\boldsymbol{\beta}|Q{-}T|.$ 
	Hence, letting ${T=0}$ and ${Q=1}$, we are interested in the problem:
	%
	\begin{align} 	\label{eqMinJK2}
	\quad\textrm{(P2)} \quad 
	\displaystyle{\min_{\mathcal{K}\subset \mathcal{V}, |\mathcal{K}| \le K}} \quad  J_{\mathcal{K}}^{(2)} = \bb\T (L_\beta + \diag(\boldsymbol\alpha_{\mathcal{K}} ))^{-1}\boldsymbol{\beta}.
	\end{align} 
	
	
	In the limiting case when $\alpha_i, \beta_i$ are either $0$ or $\infty$, (P2) reduces to the previously studied optimal stubborn placement or leader selection problems in the literature, recalled below. First, we give a general problem formulation that covers both (P1) and (P2).

	\begin{rem}\emph{(A unified problem formulation)} 
		Except for some minor differences, (P1) and (P2) are almost the same. We thus seek methods that apply to both. To this end, we embed them in the more general problem 
		\begin{equation} \label{eqMinJK}
		\quad\textrm{(P)} \qquad 
		\displaystyle{\min_{\mathcal{K}\subset \mathcal{V},|\mathcal{K}| \le K}} \quad  J_{\mathcal{K}} = \bb\T (L_\beta + \diag(\boldsymbol\alpha_{\mathcal{K}} ))^{-1}\cc,
		\vspace{-1mm}
		\end{equation} 
		where $\bb,\cc,\boldsymbol\beta \in \RN_+$. 
		The optimal value will be denoted by $J^*$. 
		As will be shown later (in Theorems~\ref{thmConvexity_G} and \ref{thmSupermodularJK} below), $J_{\mathcal{K}}$ is nonincreasing in $\mathcal{K}$. Thus, it follows that any solution to (P) is also compliant with Assumption~\ref{asmp_alpha} unless (P) is trivial (i.e., $\mathcal{K}=\varnothing$ is also optimal). 
	\end{rem}


	\vspace{-3mm}
	\subsection{Comparison to Previous Work}

	\subsubsection{Single leader case}
	The following model is widely used in the literature (see, e.g., \cite{Olfati07,Fardad13,Borkar15,Mai16a}):
	\begin{align}
	x_i(t\!+\!1) \!=\! \begin{cases} 
	\tilde\alpha_i T \!+\! \left(1 \!-\!\tilde\alpha_i \right) \sum_{j\in \mathcal{V}}{\tilde w_{ij} x_j(t)}, ~~i\in \mathcal{K} \\
	\sum_{j\in \mathcal{V}}{\tilde{w}_{ij}x_j(t)},  \hfill i\in \mathcal{V}{\setminus} \mathcal{K} 
	\end{cases}
	\label{eqModel1LeaderEquiv}
	\end{align}
	which is equivalent to \eqref{eqModel1Leader} with $\tilde\alpha_i = \textstyle {\alpha_i}/{(\alpha_i + \sum_{j\in \mathcal{N}_i}w_{ij})}$ and $\tilde{w}_{ij} = {w_{ij}}/{\sum_{j\in \mathcal{N}_i}w_{ij}}.$ 
	The works \cite{Fardad13,Borkar15} then consider 
	\begin{equation}
	\textstyle{ \min_{\mathcal{K} \subset \mathcal{V}}} \{\tilde{f}(\mathcal{K}) := \1\T(I - D_{\mathcal{K}}\tilde{W})^{-1} \1 ~|~ |\mathcal{K}| \le K\}, \label{eqfKEquiv}
	\end{equation}
	where $D_{\mathcal{K}} \!=\! I \!-\! \diag(\tilde{\boldsymbol{\alpha}}_{\mathcal{K}})$, $\tilde{\boldsymbol{\alpha}} \!=\! \1$, $\tilde{W} \!=\! [\tilde{w}_{ij}]$, and $\tilde{f}(\mathcal{K})$ represents the cumulative errors over time of all the agents, which in fact corresponds to a special case of $J^{(1)}_{\mathcal{K}}$ with $\bb = \1$ and $\boldsymbol{\xi}_0 =\1$. Specifically, \cite{Fardad13} uses a continuous relaxation of $\tilde{f}$ and the $\ell_1$-norm regularization technique to obtain a simpler optimization problem that is \emph{element-wise convex}. This allows the use of the coordinate descent approach. 
	However, it is important to point out that  the relaxed problem formulated in \cite{Fardad13} is not necessarily convex. 
	In \cite{Borkar15}, \eqref{eqfKEquiv} is used as an alternative for the problem of choosing $K$ leaders to maximize the convergence rate of \eqref{eqModel1LeaderEquiv}, which is hard to solve; the authors show supermodularity of $\tilde{f}(\mathcal{K})$ and then apply the greedy heuristic \cite{Nemhauser78} to yield approximate solutions with provable accuracy. 
	
	In \cite{Clark14AC}, the authors use a  continuous-time version of the DeGroot model and 
	consider the problem of selecting a set of nodes to become leaders (instantaneously) so as to minimize the convergence error, defined as the $l_p$-norm of the distance between the followers' states and the convex hull of the leader states. By replacing the convergence error with an upper bound that is \textit{independent} of the initial states of the network (and is  loose in general), \cite{Clark14AC} proves supermodularity of the so-obtained bound based on a connection with random walk theory, and then employs the greedy approach of \cite{Nemhauser78}.
	

	\subsubsection{Multiple leaders case}
	In \cite{Yildiz13ACM}, the authors consider a linear stochastic model the mean behavior of which is equivalent to the following deterministic model: for any $t\ge 0$, $x_i(t) = 0$ if $i\in \mathcal{V}_0$, $x_i(t) = 1$ if $i\in \mathcal{V}_1$ and 
	\begin{align}
	\textstyle x_i(t+1) =	\sum_{j\in \mathcal{V}} \tilde{w}_{ij}x_j(t), \quad i\in \mathcal{V} \setminus (\mathcal{V}_0 \cup \mathcal{V}_1),
	\label{eqSysYildiz11}
	\end{align}
	where $\mathcal{V}_0, \mathcal{V}_1 \subset \mathcal{V}$ are two disjoint sets representing two types of stubborn agents. This model is a limiting case of \eqref{eqModel2Leader} with $\alpha_i,\beta_i \in \{0,\infty\}$. 
	The optimal stubborn agent placement  problem studied in \cite{Yildiz13ACM} is stated as follows: For a given set $\mathcal{V}_0$ with known locations, choose $K$ nodes from $\mathcal{V}{\setminus} \mathcal{V}_0$ to form $\mathcal{V}_1$ so that the network bias toward $\mathcal{V}_1$ in the limit is maximized. 
	This problem is a special case of (P2) with $\bb=\1$ and $\alpha_i,\beta_i \in \{0,\infty\}$. 
	Similarly, \cite{Gionis13} 
	considers the model
	$x_i(t\!+\!1) \!=\! \sigma_i x_i(0) \!+\! (1\!-\!\sigma_{i})\textstyle \sum_{j\in \mathcal{V}} \tilde{w}_{ij}x_j(t)$, 
	where $\sigma_{i}\in [0,1]$ reflects the stubbornness level of agent $i$ regarding its initial opinion. The paper considers the problem of selecting $K$ nodes to become fully stubborn with their opinions set to $1$, then the limiting opinions of all the agents, on average, are as positive as possible. 
	In both \cite{Yildiz13ACM} and \cite{Gionis13}, submodularity of the objective functions is shown (based on connections with a random walk) and then the greedy algorithm \cite{Nemhauser78} is used to approximate the optimal solution within factor $(1-e^{-1})$.

	\subsubsection{Our Contributions}
	
	This paper generalizes and differs from the works above both in problem formulation and solution. 
	
	Regarding problem formulation, our direct followers can have dynamics like any other network node, unlike the forceful/stubborn agents in previous papers. 
	Moreover, within the context of problem (P1), the agents' initial opinions need not be the same and are taken into account explicitly in the cost $J_{\mathcal{K}}^{(1)}$, which is a tight upper bound on the cumulative convergence error of all the agents. 
	Furthermore, the agents can be weighted differently by the leader in contributing to the cost $J_{\mathcal{K}}$. 
	We believe that these are natural settings subsuming many existing scenarios in the literature, and thus likely to be of increased value for practical applications. 
	Finally, the models considered here, i.e., \eqref{eqModel1Leader} and \eqref{eqModel2Leader}, allow us to establish the convexity of the relaxation to problem (P), while neither \eqref{eqSysYildiz11} nor \eqref{eqModel1LeaderEquiv} does so. 

	Regarding problem solving, we adopt two well known heuristic approaches, namely the convex relaxation/approximation technique and the  greedy selection strategy, but the theoretical results presented here are more general and stronger than existing results. 
	In particular, our technical contributions include establishment of the supermodularity property of the objective function in problem (P) and  the convexity of its continuous relaxation; both results are  based on the M-matrix theory, an approach completely different than those used in \cite{Clark14AC,Yildiz13ACM,Gionis13,Borkar15}. 
	First, we prove the convexity of our relaxed problem in the usual sense (instead of element-wise as done in  \cite{Fardad13}) and without assuming any kind of symmetry, which is of great benefit since it allows us to use  effective numerical algorithms such as  gradient descent and interior point methods. 
	Second, we derive a general matrix supermodularity inequality that can be used to prove supermodularity of $J_{\mathcal{K}}$ as well as another type of cost function encountered in the literature. 
	Combining the supermodularity result with the notion of curvature of a submodular function \cite{Conforti84}, we show that the standard greedy algorithm \cite{Nemhauser78} applied to (P) admits an approximation guarantee sharper than $(1-e^{-1})$. 
	In addition, we introduce an improved version of this algorithm that is able to achieve better accuracy. 
	Finally, in both approaches, we derive upper and lower bounds on the optimal value, which, when combined, provide a better analysis of the obtained approximate solutions. 
	As will be illustrated in our numerical example, the approximation ratio can be  ensured to range from $70\%$ to $100\%$ depending on the value $K$.

	\vspace{-1mm}
	\section{Convexification Approach} \label{secCVX}
	\vspace{-1mm}
	Next, we study the convexity of the continuous relation defined by $J_{\mathcal{K}}$ and discuss numerical methods to solve the relaxed  problem. 
	
	\vspace{-2mm}
	\subsection{Convexity of Relaxation}
	Consider problem (P), 
	equivalently stated as 
	\begin{align}
	\label{eqMinJK2b}
	\quad\textrm{(P)}\quad\quad	
	\begin{split}
	\textstyle{\min_{\sB \in \mathbb{R}^N}} \quad & f(\sB) := \bb\T (L_\beta+ \diag(\sB\diamond \boldsymbol\alpha ))^{-1}\cc\\
	\mathrm{s.t.} \quad & s_i \in \{0,1\}\quad \forall i=1,\ldots, N  \\
	& \card(\sB) \le K, 
	\end{split}
	\end{align} 
	with  
	the optimal value denoted by $f^*_P$.
	Recall that $L_\beta \!=\! L \!+\! \diag(\boldsymbol\beta)$. 
	We will also use $L_0 \!:=\! L$ to signify the case  $\boldsymbol{\beta} \!=\! \0$, i.e., problem~(P1). 
	
	Problem (P) is combinatoric (hence nonconvex) and generally hard to solve especially for large networks. 
	We defer our discussion on the properties of $f$ for now, and instead begin by discussing techniques to handle the cardinality constraint. The first idea is to relax this constraint, resulting in a continuous relaxation of (P) as follows:
	\begin{align}
	\label{eqMinJK2RelaxLB}
	\textrm{(P$\_$Rlxd)}\qquad
	\textstyle{\min_{\y}} \quad & \{f(\y)~|~ \y \in [0,1]^N, \1\T\y \le K\},
	\end{align}
	where the optimal value, denoted by $f^*_{P\_Rlxd}$, is clearly a lower bound for that of  (P), i.e., $f^*_{P\_Rlxd} \le f^*_P$. 
	Of course this bound is useful if an optimal solution $\y_{P\_Rlxd}$ is computable. In that case, if $\y_{P\_Rlxd}$ is a binary vector, then it is also the optimal for (P). 
	However, a binary solution is not to be expected as $\y_{P\_Rlxd}$ tends to be fractional.  
	In general, we can use a simple projection onto the feasible set of problem (P) to obtain an approximation (e.g., rounding up to $1$ the $K$ largest elements of $\y_{P\_Rlxd}$ and zeroing out the rest), 
	resulting in  an upper bound on $f^*_P$, which we denote by $\bar{f}_{P\_Rlxd}$. 
	
	Another practical approximation is to use $\ell_1$-norm regularization:
	\begin{align}
	\label{eqMinJK2Relax2}
	\text{(P$\_$Aprx)}~~
	\textstyle{\min} ~ & \{g(\y) \!:= f(\y) +\! \gamma \1\T \y ~|~ \y \in \Omega:= [0,1]^N \},
	\end{align}
	where $\gamma$ is a positive parameter the role of which is to promote sparsity of the solution. 
	If $\gamma = 0$, then $\y = \1$ is the global solution to this problem (see also Theorem \ref{thmConvexity_G} below); increasing $\gamma$ is a way to penalize the number of nonzero elements in the solution. 
	Let $\sB^*_{P\_Aprx}$ be the binary vector corresponding to the $K$ largest elements of a solution to (P$\_$Aprx). Then $f_{P\_Aprx} := f(\sB^*_{P\_Aprx})$ is  an upper bound on the optimal value of (P), and the gap $(f_{P\_Aprx}-f^*_{P\_Rlxd})$ can also be used to evaluate the quality of our approximations.

	We now establish the convexity of $f$, which would clearly be pertinent for problems (P$\_$Rlxd) and (P$\_$Aprx). 
	For similar cost functions that are convex under symmetry of the Laplacian matrix $L$, see, e.g., \cite{Patterson10CDC,Long10,Lin11}. 
	Here, we do not assume any symmetry conditions on the Laplacian matrix $L$ (even on its structure), or on the nonnegative vectors $\bb$ and $\cc$ (trivial cases such as $\bb = \0$ or $\cc = \0$ are excluded). The convexity proof relies on the following lemma.

	\begin{lem} \label{lemPVPVP}
		Let $A \in \mathbb{R}^{N\times N}$ be nonnegative and $V  \in \mathbb{R}^{N\times N}$ be diagonal. Then for each $m\ge 0$,  $\textstyle \sum_{i+j+k=m}A^iVA^jVA^k$ is a nonnegative matrix, where $i,j,k$ are nonnegative integers. 
	\end{lem}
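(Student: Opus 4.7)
The plan is to prove the lemma entry by entry, by reorganizing each term $A^iVA^jVA^k$ as a weighted sum over paths in the index set $\{1,\ldots,N\}$ and then applying a simple quadratic identity to handle the potentially mixed signs of the diagonal of $V$. Fix row and column indices $p,q$. Since $V$ is diagonal,
\begin{equation*}
[A^iVA^jVA^k]_{pq} = \sum_{r,s} V_{rr}V_{ss}\,[A^i]_{pr}[A^j]_{rs}[A^k]_{sq}.
\end{equation*}
Expanding each $[A^\ell]_{uv}$ as the usual sum over length-$\ell$ path products and summing over $i+j+k=m$, I would re-parameterize via the bijection $(i,j,k)\mapsto(a,b):=(i,\,i+j)$ between nonneg integer triples summing to $m$ and pairs $0\le a\le b\le m$ (inverse: $i=a$, $j=b-a$, $k=m-b$), which consolidates the three sub-paths of lengths $i,j,k$ into a single path of length $m$ with $r=j_a$, $s=j_b$. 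This yields
\begin{equation*}
\Big[\sum_{i+j+k=m}A^iVA^jVA^k\Big]_{pq} = \sum_{\substack{j_0=p,\; j_m=q\\ j_1,\ldots,j_{m-1}}}\Big(\prod_{t=1}^m A_{j_{t-1}j_t}\Big)\sum_{0\le a\le b\le m}V_{j_aj_a}V_{j_bj_b}.
\end{equation*}

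The algebraic core of the proof is then the elementary scalar identity
\begin{equation*}
2\sum_{0\le a\le b\le m} v_av_b = \Big(\sum_{t=0}^{m}v_t\Big)^2 + \sum_{t=0}^{m}v_t^2 \;\ge\; 0, \qquad v_0,\ldots,v_m\in\mathbb{R},
\end{equation*}
which follows by separating the diagonal $a=b$ contribution and symmetrizing the strict part $a<b$. Applying this identity with $v_t:=V_{j_tj_t}$ shows that the inner sum in the previous display is nonneg along every path. Combined with the fact that each product $\prod_{t=1}^m A_{j_{t-1}j_t}$ is nonneg because $A\ge \mathbf{0}$, every $(p,q)$ entry is nonneg, which is the claim.

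I expect the principal obstacle to be purely the bookkeeping in the first step: carefully verifying that the parameterization $(i,j,k)\leftrightarrow(a,b)$ is in fact a bijection and that the triple path expansion telescopes cleanly onto a single path of length $m$ even in the boundary cases $i=0$, $j=0$, or $k=0$ (where one of the sub-paths degenerates and its endpoints coincide with the cut indices $r=j_a$ or $s=j_b$). Once this routine combinatorial identification is in place, the scalar inequality above carries essentially all the content of the proof, since it is precisely the place where the potential sign cancellations from the diagonal entries of $V$ are absorbed into a sum of squares.
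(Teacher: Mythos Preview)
Your proposal is correct and follows essentially the same approach as the paper's proof: both reparameterize $(i,j,k)$ with $i+j+k=m$ as pairs $0\le a\le b\le m$, interpret each matrix entry as a sum over walks of length $m$, and then apply the identical scalar identity $2\sum_{0\le a\le b\le m} v_av_b = (\sum_t v_t)^2 + \sum_t v_t^2$ to absorb the possible sign mix of $V$'s diagonal. The only differences are notational, and the bookkeeping concern you flag (boundary cases $i,j,k=0$) is handled in the paper exactly as you anticipate, by treating the walk decomposition uniformly.
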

	\begin{proof}
		By change of variables, we have
		$$\textstyle \sum_{i+j+k=m}A^iVA^jVA^k = \sum_{0\le q \le r \le m} A^qVA^{r-q}VA^{m-r}.$$
		Let $V=\diag([v_1,\ldots,v_N])$. The $sp$-th entry of the matrix above is 
		\begin{align}
		\textstyle \sum_{0\le q \le r \le m} \sum_{1\le i,j\le N}  [A^q]_{si}[A^{r-q}]_{ij}[A^{m-r}]_{jp} v_iv_j.\label{eqLem1a}
		\end{align}
		To simplify this expression, denote $A=[a_{kl}]$ and 
		consider the graph generated by matrix $A$ where $a_{ij}$ denotes the weight of the directed edge $i\to j$. 
		Let $\mathcal{P}_m$ denote the set of all walks of length $m$ 
		from node $s_0 = s$ to $s_m=p$, i.e., those of the form
		$s=s_0\stackrel{e_1}{\to}s_1 \stackrel{e_2}{\to} \ldots \stackrel{e_m}{\to}s_m = p,$ 
		where $e_i = (s_{i-1}s_i)$ denotes the directed edge from $s_{i-1}$ to $s_i$. 
		Now for each tuple $(qirj)$, let $P_{(qirj)} \subset \mathcal{P}_m$ denote the set of walks satisfying $s_q = i$ and  $s_r=j$ (i.e., fixing positions $q$ and $r$). Then the term under the double summation in \eqref{eqLem1a} represents the total weight of all the walks\footnote{The weight of a walk is defined as the product of the weights of all the edges along the walk.} in $P_{(qirj)}$ multiplied by $v_{s_q}v_{s_r}$, i.e.,
		\begin{align}
		[A^q]_{si}[A^{r-q}]_{ij}[A^{m-r}]_{jp} v_iv_j =\!\!\!\!\! \sum_{\{e_k\}_1^m \in P_{(qirj)}}  \!\!\!\!\!  a_{e_1}a_{e_2}\ldots a_{e_m} v_{s_q}v_{s_r}. \nnb
		\end{align}
		Summing the right side of this relation over $1\le i,j\le N$ yields the total weight of all the walks in $\mathcal{P}_m$ (each being scaled by $v_{s_q}v_{s_r}$), namely,
		$\sum_{1\le i,j\le N}\sum_{\{e_k\}_1^m \in P_{(qirj)}}  \!\!\!\!\!  a_{e_1}a_{e_2}\ldots a_{e_m} v_{s_q}v_{s_r} =  \sum_{\{e_k\}_1^m \in \mathcal{P}_{m}}  a_{e_1}a_{e_2}\ldots a_{e_m} v_{s_q}v_{s_r}.$ 
		Thus, \eqref{eqLem1a} equals
		\begin{align}
		&\textstyle \sum_{0\le q \le r \le m} \sum_{\{e_k\}_1^m \in \mathcal{P}_{m}}  a_{e_1}a_{e_2}\ldots a_{e_m} v_{s_q}v_{s_r} \nnb\\
		&= \textstyle \sum_{\{e_k\}_1^m \in \mathcal{P}_{m}}  a_{e_1}a_{e_2}\ldots a_{e_m} \sum_{0\le q \le r \le m} v_{s_q}v_{s_r}\nnb\\ 
		&= \textstyle \sum_{\{e_k\}_1^m \in \mathcal{P}_{m}} \frac{1}{2} a_{e_1}a_{e_2}\ldots a_{e_m} \big[ \big(\sum_{0\le i \le m} v_{s_i}\big)^2 \!+\!  \sum_{0\le i \le m}v_{s_i}^2  \big]\nnb
		\end{align}	
		which is nonnegative, thereby completing the proof.
	\end{proof}

	We are now ready to establish the convexity as well as other important properties of our objective functions. 
	\begin{thm} \emph{(Properties of $f$)}\label{thmConvexity_G}
		For any $\bb, \cc, \boldsymbol{\alpha} \in \mathbb{R}^N_+{\setminus}\{\0\}$ and $\boldsymbol{\beta} \in \mathbb{R}^N_+$, 
		let $\Omega= [0,1]^N$ 
		and consider $f: \mathbb{R}^N_+ \to \mathbb{R}\cup \{\infty \}$ defined in \eqref{eqMinJK2b}. 
		Then $f$ is positive, convex and decreasing on $\Omega$. It is smooth on the interior of $\Omega$ with gradient $\nabla f$ and Hessian $\mathbf{H}$ given by
		\begin{equation}
		\begin{split}
		\!\nabla f(\y) \!=\! - ( Y\nT \bb  )\! \diamond \boldsymbol\alpha\diamond  ( Y^{-1}\! \cc), ~~~Y\!:=L_\beta+ \diag(\y\!\diamond\! \boldsymbol\alpha) 
		\end{split}
		\label{eqGradG}
		\end{equation}		
		\begin{equation}
		\begin{split}
		\mathbf{H}(\y) &= H(\y)+H\T(\y) \quad \textrm{with}\quad \\
		H(\y) &:= \diag(\boldsymbol\alpha \diamond (Y\nT \bb)) Y^{-1}  \diag( \boldsymbol\alpha \diamond (Y^{-1} \cc)). 
		\end{split} \label{eqHessianG}
		\end{equation}
		Moreover, $\mathbf{H}(\y)$ is a nonnegative matrix  and 
		\begin{equation}
		0 \preceq \mathbf{H}(\y) \preceq L_f I, \quad \textrm{with} \quad L_f := \rho(\mathbf{H}(\0)). \label{eqStrongCVX}
		\end{equation}
		Furthermore, $L_f \le N\max_{ij}[\mathbf{H}(\0)]_{ij}$. 
	\end{thm}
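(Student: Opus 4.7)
The plan is to dispose of positivity, the explicit gradient/Hessian formulas, decreasingness, and elementwise nonnegativity of $\mathbf{H}$ by direct computation, to establish convexity (which I expect to be the main obstacle) through a Neumann series expansion combined with Lemma \ref{lemPVPVP}, and to obtain the spectral bounds from M-matrix monotonicity plus Perron-Frobenius. Under Assumption \ref{asmp_fixedGraph} and the natural extension of Lemma \ref{lemStability}(ii), the matrix $Y := L_\beta + \diag(\y\diamond\boldsymbol{\alpha})$ is a nonsingular irreducible M-matrix on the interior of $\Omega$, so $Y^{-1}>\0$ elementwise and $f(\y) = \bb\T Y^{-1}\cc > 0$; smoothness follows from smoothness of matrix inversion on the open set where $Y$ is invertible. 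The identity $\partial Y^{-1}/\partial y_i = -\alpha_i Y^{-1}E_{ii}Y^{-1}$ yields \eqref{eqGradG} immediately, and differentiating once more and applying the product rule produces precisely the two summands $H(\y)$ and $H\T(\y)$ of \eqref{eqHessianG}. Because $\boldsymbol\alpha$, $Y\nT\bb$ and $Y^{-1}\cc$ are all elementwise nonnegative, $\nabla f(\y) \le \0$ componentwise, so $f$ is decreasing; the same observation shows that each factor of $H(\y)$ is a nonnegative matrix or vector, hence $\mathbf{H}(\y) = H(\y)+H\T(\y)$ is elementwise nonnegative.

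Proving $\mathbf{H}(\y)\succeq 0$ without any symmetry assumption on $L$ is the main obstacle. For an arbitrary $\x\in\mathbb{R}^N$, set $V := \diag(\boldsymbol\alpha\diamond\x)$ (whose entries may have either sign). A direct contraction of \eqref{eqHessianG} gives the identity
\[
\x\T\mathbf{H}(\y)\x \;=\; 2\,\bb\T\, Y^{-1}VY^{-1}VY^{-1}\,\cc,
\]
so the task reduces to showing that $Y^{-1}VY^{-1}VY^{-1}$ is elementwise nonnegative. Since $Y$ is a nonsingular M-matrix, I can write $Y = sI - B$ with $B\ge \0$ and $s > \rho(B)$, whence $Y^{-1} = \sum_{k\ge 0}s^{-(k+1)}B^k$. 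Multiplying the three series together and regrouping by $m := i+j+k$ gives
\[
Y^{-1}VY^{-1}VY^{-1} \;=\; \sum_{m\ge 0} s^{-(m+3)}\!\!\sum_{i+j+k=m}\!\! B^iVB^jVB^k,
\]
and Lemma \ref{lemPVPVP} (applied with $A=B$) says each inner sum is an elementwise nonnegative matrix. Pre- and post-multiplying by the nonnegative vectors $\bb$ and $\cc$ then forces $\x\T\mathbf{H}(\y)\x\ge 0$, establishing convexity.

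For \eqref{eqStrongCVX}, monotonicity of the M-matrix inverse (cf. Lemma \ref{thm_inverseMmatrix} in Appendix \ref{secMatrixResults}) implies $Y^{-1}$ is elementwise nonincreasing in $\y$; consequently $Y\nT\bb$, $Y^{-1}\cc$ and $Y^{-1}$ itself are each elementwise dominated by their values at $\y = \0$, so $0 \le \mathbf{H}(\y) \le \mathbf{H}(\0)$ elementwise. Since $\mathbf{H}(\y)$ is symmetric and elementwise nonnegative, Perron-Frobenius monotonicity of the spectral radius yields $\rho(\mathbf{H}(\y)) \le \rho(\mathbf{H}(\0)) = L_f$, whence $\mathbf{H}(\y) \preceq \rho(\mathbf{H}(\y))I \preceq L_f I$. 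Finally $L_f \le \|\mathbf{H}(\0)\|_\infty \le N\max_{ij}[\mathbf{H}(\0)]_{ij}$ is the standard row-sum bound for nonnegative matrices.
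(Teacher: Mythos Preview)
Your proposal is correct and follows essentially the same route as the paper: positivity and decreasingness via nonnegativity of $Y^{-1}$, explicit gradient/Hessian by differentiating $Y^{-1}$, convexity via the Neumann series expansion of $Y^{-1}$ combined with Lemma~\ref{lemPVPVP} to show $Y^{-1}VY^{-1}VY^{-1}\ge\0$, and the spectral bounds from elementwise monotonicity of $\mathbf{H}(\y)$ in $\y$ together with Perron--Frobenius. One minor remark: the elementwise monotonicity of $Y^{-1}$ in $\y$ is not the content of Lemma~\ref{thm_inverseMmatrix} (which gives the inequality $P_{jk}\ge P_{ji}P_{ii}^{-1}P_{ik}$); the monotonicity you need follows directly from the Neumann series representation you already have, or from the Woodbury identity, so simply adjust the citation.
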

	\begin{proof}
		Smoothness of $f$ follows from its definition. 
		Positiveness follows from assumptions $\bb, \cc, \boldsymbol{\beta} \ge \0$ and the fact that $Y = L_\beta+ \diag(\y \diamond \boldsymbol\alpha)$  is a nonsingular M-matrix whenever $\y \in \Omega$ and $\boldsymbol{\beta} $ are not both equal $\0$, which ensures that $Y^{-1}$ is a nonnegative matrix (see Lemma~\ref{thm_inverseMmatrix} in Appendix~\ref{secMatrixResults}). 
		Hence $f(\y)  = \bb\T Y^{-1}\cc \ge 0$ for all $\y \in \Omega$. Next, we find the first differential of $f$, namely, 
		\begin{align}
		\ds f(\y)  &= \bb\T \ds Y^{-1}\cc 
		= -\bb\T Y^{-1} \diag(\boldsymbol\alpha) \diag(Y^{-1}\cc)\ds \y, \nnb\\
		&= -\big[ (Y\nT \bb)\diamond \boldsymbol\alpha \diamond(Y^{-1}\cc) \big]\T\ds \y,
		\end{align}
		where we have used the fact that $\ds Y^{-1} = -Y^{-1} (\ds Y) Y^{-1}$, $\ds Y = \ds (L_\beta + \diag(\y\diamond \boldsymbol\alpha) ) = \diag(\ds \y\diamond \boldsymbol\alpha)$, and $\diag (\x)\y = \diag (\y)\x = \x\diamond\y$. Therefore, $\nabla f(\y) = -\big( Y\nT \bb \big)\diamond \boldsymbol\alpha \diamond \big( Y^{-1} \cc \big).$ 
		
		Since $Y^{-1} \ge \0$, we have $\nabla f(\y) \le \0$, which implies that $f$ is decreasing in $\y$. 
		In fact, a stronger statement holds, that is, $Y^{-1} = (L_\beta + \diag(\y\diamond \boldsymbol\alpha))^{-1}$ is nonnegative and decreasing in $\y$. As a result, $\|\nabla f(\y)\|_2 \le \|\nabla f(\0)\|_2, \forall \y \in \Omega$. When $\boldsymbol{\beta} \neq \0$, $\|\nabla f(\0)\|_2 < \infty$, thus $f$ is Lipschitz continuous with parameter $\|\nabla f(\0)\|_2$ on $\Omega$.

		Next, we find the second differential of $f$ as follows: 
		\begin{align}
		\ds^2 f(\y)  
		&= 2\bb\T Y^{-1} \diag(\ds \y \diamond  \boldsymbol\alpha) Y^{-1}  \diag(\ds \y \diamond \boldsymbol\alpha) Y^{-1} \cc,\nnb\\
		&= 2\ds \y\T \diag(\boldsymbol\alpha \diamond (Y\nT \bb)) Y^{-1}  \diag( \boldsymbol\alpha \diamond (Y^{-1} \cc)) \ds \y, \nnb\\
		&= \ds \y\T (H+H\T) \ds \y \label{eqDifferentialG}
		\end{align}
		with $H$ defined as in \eqref{eqHessianG}. 
		Thus, $\mathbf{H} = (H+H\T)$ is the Hessian of $f$. 
		Clearly, $\mathbf{H}\ge \0$ since $Y^{-1}, W, \bb, \cc$ are so. 
		
		For convexity, it suffices to show that $\ds^2 f$ given by \eqref{eqDifferentialG} is positive semidefinite on $\Omega{\setminus}\{\0\}$. 
		Indeed, since $\bb$ and $\cc$ are nonnegative, we will prove that $Y^{-1}VY^{-1}VY^{-1} \ge \0$ where  $V=\diag(\ds \y\diamond \boldsymbol\alpha)$. Note that $Y$ is a nonsingular M-matrix. Thus, by definition, $Y = s(I - A)$ for some positive $s$ and some nonnegative matrix $A$ with $\rho(A)<1$. Then we have $Y^{-1}=s^{-1}\sum_{i=0}^{\infty}A^i$ 
		and hence
		\begin{align}
		Y^{-1}VY^{-1}VY^{-1} &= \textstyle s^{-3}\sum_{i\ge 0}\sum_{j\ge 0}\sum_{k\ge 0}A^iVA^jVA^k \nnb\\
		&= \textstyle s^{-3}\sum_{m\ge 0}\sum_{i+j+k=m}A^iVA^jVA^k. \label{eqPVPVP}
		\end{align}
		Now by Lemma \ref{lemPVPVP}, 
		$\sum_{i+j+k=m}A^iVA^jVA^k \ge \0$ for any $m \ge 0$. 
		Therefore,  $Y^{-1}VY^{-1}VY^{-1} \ge \0$, thereby proving convexity of $f$. 
		
		Next, to prove \eqref{eqStrongCVX}, we use the inequality 
		\begin{equation}
		\x\T\mathbf{H}(\y)\x \le \rho(\mathbf{H}(\y))\x\T\x, \quad \forall \x\in \mathbb{R}^N, \y \in  \Omega, \label{eqHessianStrictPosG}
		\end{equation}
		which holds since $\rho(\mathbf{H}(\y))$ is the largest eigenvalue of  the nonnegative (and symmetric) matrix $\mathbf{H}(\y)$ (see Lemma~\ref{thmSpectralRadiusNonnegativeMat} in Appendix~\ref{secMatrixResults}). 
		Note also that $H(\boldsymbol{\y})$ is decreasing in $\y \in \Omega$. Thus we have 
		$\0_{N \times N} \le \mathbf{H}(\y) \le\mathbf{H}(\0) \le \max_{ij}[\mathbf{H}(\0)]_{ij} \1\1\T.$ 
		Finally, by Lemma~\ref{thmSpectralRadiusBounds} in Appendix~\ref{secMatrixResults}, we have $\rho(\mathbf{H}(\y)) \le \rho(\mathbf{H}(\0)) = L_f \le \max_{ij}[\mathbf{H}(\0)]_{ij} \rho(\1\1\T) = N\max_{ij}[\mathbf{H}(\0)]_{ij}$. 
	\end{proof}
	%
	%
	%
	%
	The following result is immediate, so the proof is omitted.  
	\begin{cor} \label{corGradient_F} 
		The function $g$ is smooth and convex on $\Omega$ with gradient $\nabla g(\y) = \nabla f(\y) + \gamma\1,$ 
		which is Lipschitz continuous with Lipschitz constant $L_g = L_f$. 
		Moreover, if $\eta := \min_{\y\in \Omega}\lambda_{\min}(\mathbf{H(y)}) >0$, then $g$ is strongly convex with parameter $\eta$. 
	\end{cor}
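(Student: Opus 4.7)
The plan is to derive all three assertions directly from the properties of $f$ already established in Theorem~\ref{thmConvexity_G}, by exploiting the fact that $g$ differs from $f$ only by the linear term $\gamma \1\T \y$.

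First, I would note that $\gamma\1\T\y$ is $C^\infty$ on $\mathbb{R}^N$, convex (in fact affine), has gradient $\gamma\1$, and vanishing Hessian. Since Theorem~\ref{thmConvexity_G} gives that $f$ is smooth and convex on $\Omega$ with gradient $\nabla f(\y)$ and Hessian $\mathbf{H}(\y)$, the linearity of the gradient and Hessian operators immediately yields that $g$ is smooth and convex on $\Omega$ with $\nabla g(\y) = \nabla f(\y) + \gamma \1$ and Hessian $\mathbf{H}_g(\y) = \mathbf{H}(\y)$.

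Next, for the Lipschitz bound, I would apply the standard result that a $C^2$ function whose Hessian satisfies $\mathbf{H}_g(\y) \preceq L\, I$ uniformly has a Lipschitz gradient with constant $L$ (obtained by integrating $\nabla g$ along the segment between $\x,\y \in \Omega$ and taking operator norms). Combining this with the bound $\mathbf{H}(\y) \preceq L_f I$ from \eqref{eqStrongCVX} gives $L_g = L_f$. For strong convexity, I would use the analogous characterization: if $\mathbf{H}_g(\y) \succeq \eta I$ for all $\y \in \Omega$, then a second-order Taylor expansion with integral remainder yields $g(\y') \ge g(\y) + \nabla g(\y)\T(\y'-\y) + \tfrac{\eta}{2}\|\y'-\y\|^2$ for all $\y,\y' \in \Omega$, which is precisely the defining inequality of strong convexity with parameter $\eta$. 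The hypothesis $\eta = \min_{\y\in \Omega}\lambda_{\min}(\mathbf{H}(\y)) > 0$ supplies exactly the required uniform lower bound.

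I do not anticipate any real obstacle here since each step is a routine consequence of Theorem~\ref{thmConvexity_G} together with elementary multivariable calculus; this is presumably why the paper omits the proof. The only mild subtlety worth flagging is that $\mathbf{H}(\y)$ is only asserted to be nonnegative entry-wise and bounded above in \eqref{eqStrongCVX}, so the strong-convexity clause is genuinely conditional on the additional positivity assumption $\eta > 0$, which needs to be verified separately for any specific instance (for example by showing $Y^{-1}$ has no null structure aligning with the support of $\boldsymbol\alpha$, $\bb$, $\cc$).
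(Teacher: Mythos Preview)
Your proposal is correct and aligns with the paper's treatment: the paper explicitly omits the proof, stating the result is immediate from Theorem~\ref{thmConvexity_G}, and you have supplied precisely the routine details (adding a linear term preserves smoothness and convexity, shifts the gradient by $\gamma\1$, leaves the Hessian unchanged, and then the bounds in \eqref{eqStrongCVX} give the Lipschitz and strong-convexity claims via the standard second-order characterizations). There is nothing to add.
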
 
	Note that when $\boldsymbol{\beta} {=} \0$ we have $L_{\beta} {=} L$, which is singular. Thus, the Lipschitz constant $L_f\! =\! \rho(\mathbf{H}(\0)) \!=\! \infty$. It is now clear that both (P$\_$Rlxd) and (P$\_$Aprx) are convex with a (possibly strongly) convex smooth cost function and thus can be solved by various algorithms, including Interior Point Methods (IPMs) and the Projected Gradient Method (PGM) (see e.g., \cite{Wright97, Bertsekas99Book, Potra00, Nesterov04}), provided that $\nabla f(\y)$ can be evaluated efficiently (see Remark \ref{remGradientEva} below).

	\begin{rem} (\emph{On selecting regularization parameter $\gamma$})\label{remRegularizationParam}
		From the optimal solution $\tilde{\y}^*$ of problem \eqref{eqMinJK2Relax2} for a particular $\gamma$, we can obtain an approximate solution to the original problem \eqref{eqMinJK2b} by choosing nodes corresponding to the $K$ largest entries of $\tilde{\y}^*$. As $\gamma$ increases, there (usually) exists $\bar\gamma$ such that $\card(\tilde{\y}^*) \le K$. Once this value is found (which can be done fairly easily), $\gamma$ can be tuned within the interval $[0,\bar\gamma]$ to find the best approximation. 
	\end{rem}

	\vspace{-2mm}
	\subsection{Numerical Methods}
	We briefly discuss two numerical algorithms that can be used to solve  problem (P$\_$Aprx). 
	Problem (P$\_$Rlxd) can be treated similarly.
	
	First, for not very large networks, we can use primal-dual IPMs \cite{Wright97}, where each iteration involves computing the Newton direction, which requires $O(N^3)$ operations to evaluate gradient $\nabla f$ and Hessian matrix $\mathbf{H}$, given respectively in \eqref{eqGradG} and \eqref{eqHessianG}. The storing cost is $O(N^2)$. 
	In practice, the method converges in a few iterations. 
	
	Second, for large networks where IPMs are not suitable, we can use 
	the PGM (only requiring gradient evaluations) given by:
	\begin{equation}
	\y^{(t+1)} = P_{\Omega} \big[ \y^{(t)} - \mu^{(t)} \big( \nabla f(\y^{(t)}) + \gamma\1 \big) \big], \label{PGM}
	\end{equation}
	where $P_{\Omega}$ denotes the projection operator onto  $\Omega$  and  $\mu^{(t)}$ step size. 
	It follows from \cite[Prop.~2.3.1 and 2.3.2]{Bertsekas99Book} and \cite[Thm.~2.2.8]{Nesterov04} that (i) if $\mu^{(t)}$ is chosen by the Armijo rule, then every limit point of $\{\y^{(t)}\}$ is an optimal solution to problem \eqref{eqMinJK2Relax2}, and (ii) if $\boldsymbol{\beta} \neq \0$, any constant step size  $\mu^{(t)} \equiv \mu \in (0, L_f)$ can be used. Moreover, if $\eta > 0$, then for $\mu = {L_f^{-1}}$, $\y^{(t)}$ converges linearly to the unique solution $\y^*$ with  rate $\sqrt{1-{\eta}{\mu}}$.
	
	Note also that when $\boldsymbol{\beta} \!=\! \0$ (i.e., problem (P1) where $f(\0)=\infty$), iteration \eqref{PGM} should be modified as follows. 
	Given any $\y^{(0)} \in \Omega\setminus \{\0\}$, $\Omega^{0} = \{\y \in \Omega| g(\y) \le g(\y^{(0)})\}$ is a convex compact set excluding $\0$. Thus, $g$,  $\nabla g$ and $\nabla^2 g = \mathbf{H}$ are continuous on $\Omega^{0}$. Moreover, $\nabla g$ is Lipschitz continuous on $\Omega^0$ with coefficient $L_g^0 = \max_{\y \in \Omega^0} \|\mathbf{H}(\y)\|_2$.  As a result, we can replace $P_\Omega$ by $P_{\Omega^0}$ or choose a step size such that $\y^{(t)} \in \Omega^{0}$.

	\begin{rem}\label{remGradientEva}(\textit{On gradient evaluation}) 
		Computing $\nabla f(\y)$ involves inversion of $Y = (L_\beta+ \diag(\y\diamond \boldsymbol\alpha))$, which usually costs $O(N^3)$ operations 
		and $O(N^2)$ memory storage, and thus is not practical for large networks. 
		In such a case, we can resort to the following alternative. From \eqref{eqGradG}, we have $\nabla f(\y) \!=\! -\ub\! \diamond\! \boldsymbol\alpha \!\diamond\! \vb$  with  
		$\ub= Y\nT \bb$ and $ \vb = Y^{-1} \cc.$ 
		So, $\uu$ and $\vv$ are respectively the solutions to the sparse linear equations $Y\T \ub= \bb$ and $Y\vb = \cc$, 
		for which many algorithms are available, e.g., power-iteration. Specifically, let $Y = D_y + E$ where $D_y$ and $E$ denote the diagonal and off-diagonal parts of $Y$. 
		Clearly, only $D_y$ depends on $\y$. 
		Now consider $\ub$, 
		which satisfies 
		$
		\bb =  D_y\ub + E\T\ub. 
		$
		Since $D_y$ is invertible, we have a fixed point relation 
		$
		\ub = -D_y^{-1}E\T\ub+D_y^{-1}\bb. 
		$
		Under Assumptions \ref{asmp_fixedGraph} and \ref{asmp_alpha}, the right side defines a contraction mapping with coefficient $\rho(D_y^{-1}E\T) < 1$. 
		Thus, we can use iteration $\ub_{k+1} = -D_y^{-1}(E\T\ub_k-\bb)$ 
		to compute $\ub$, which is highly scalable since (i) $E$ is sparse and can be read off from $L$ (or $W$), whose storage takes only $O(|\mathcal{E}|)$, 
		and (ii) the computation also takes $O(|\mathcal{E}|)$ operations. 
		Moreover, suppose we terminate this iteration in $k_u$ iterations, 
		with convergence error proportional to $\rho^{k_u}(D_y^{-1}E\T)$, 
		then the running time to compute $\ub$ is $O(k_u|\mathcal{E}|)$. 
		Finally, $\vb$ can be computed in the same manner, namely, $\vb_{k+1} = -D_y^{-1}(E\vb_k-\cc)$. 
	\end{rem}

	\vspace{-2mm}
	\section{Supermodularity and Greedy Algorithms} \label{secSupermodularity}
	In this section, we develop an alternative approach to problem (P) based on the greedy strategy where approximation bounds for the suboptimal solutions can be established. 
	To this end, we first prove that $J_{\mathcal{K}}$ is monotone and supermodular in the set-variable $\mathcal{K}$. 
	As a result, problem (P) 
	admits an accuracy $(1 - e^{-1})$ approximation algorithm \cite{Nemhauser78}. 
	We then develop an improved version of this algorithm that can achieve better approximate solutions. 
	
	\vspace{-2mm}
	\subsection{Supermodularity Results} 
	Our main results in this subsection are the following two lemmas, the first of which is a \textit{matrix supermodularity inequality} and the second is a \textit{composition property}.  
	\begin{lem} \label{lemMatrixSupermodularity} 
		For $\mathcal{S} \subset \mathcal{V}$, let $\Gamma_\mathcal{S} = \diag(\boldsymbol{\alpha}_{\mathcal{S}})$. The function $(L_\beta+ \Gamma_\mathcal{S})^{-1} \in \mathbb{R}^{N\times N}_+$ is nonincreasing and supermodular in $\mathcal{S}$, i.e., the following matrix inequalities hold for any $v, k \in \mathcal{V}{\setminus}\mathcal{S}$
		\begin{equation}
		\begin{split}
		&(L_\beta+ \Gamma_\mathcal{S})^{-1} - (L_\beta+ \Gamma_{\mathcal{S} \cup \{v\}})^{-1} \\
		&\geq (L_\beta+ \Gamma_{\mathcal{S}\cup \{k\}})^{-1}	- (L_\beta+ \Gamma_{\mathcal{S} \cup \{k,v\}})^{-1} \ge \0. 
		\end{split}
		\label{eqMatrixSupermodular}
		\end{equation}
		This result also holds true if we replace $L_\beta$ with $L_0$. 
	\end{lem}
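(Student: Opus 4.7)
The plan is to prove both the monotonicity and supermodularity assertions simultaneously via a smooth interpolation, reducing everything to the elementary fact (see Lemma~\ref{thm_inverseMmatrix}) that the inverse of a nonsingular M-matrix is entry-wise nonnegative. For $(s,t)\in[0,1]^2$, I would introduce the two-parameter family
\begin{equation*}
M(s,t):=L_\beta+\Gamma_{\mathcal{S}}+s\,\alpha_k\,\e_k\e_k\T+t\,\alpha_v\,\e_v\e_v\T,
\end{equation*}
where $\e_i$ denotes the $i$-th standard basis vector. Since $L_\beta+\Gamma_{\mathcal{S}}$ is a nonsingular M-matrix (by Lemma~\ref{lemStability}(ii), viewing $\Gamma_{\mathcal{S}}$ as absorbing the trust terms), and nonnegative diagonal perturbations preserve this property, $M(s,t)$ is invertible throughout the square and $\Phi(s,t):=M(s,t)^{-1}\ge\0$ entry-wise for every $(s,t)\in[0,1]^2$.

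Using $\ds M^{-1}=-M^{-1}(\ds M)M^{-1}$, I would first compute
\begin{equation*}
\tfrac{\partial \Phi}{\partial t}=-\alpha_v(\Phi\e_v)(\e_v\T\Phi),\qquad \tfrac{\partial \Phi}{\partial s}=-\alpha_k(\Phi\e_k)(\e_k\T\Phi),
\end{equation*}
each an entry-wise nonpositive rank-one matrix (as $\Phi\ge\0$). Integrating $\partial\Phi/\partial t$ in $t$ over $[0,1]$ with $s\in\{0,1\}$ fixed establishes the monotonicity
\begin{equation*}
(L_\beta+\Gamma_{\mathcal{S}\cup\mathcal{U}})^{-1}\ge(L_\beta+\Gamma_{\mathcal{S}\cup\mathcal{U}\cup\{v\}})^{-1},\quad \mathcal{U}\in\{\varnothing,\{k\}\},
\end{equation*}
which in particular yields the second (nonnegativity) inequality in \eqref{eqMatrixSupermodular}.

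For the first (supermodularity) inequality I would differentiate once more and use $\e_k\T\Phi\e_v=\Phi_{kv}$ to obtain
\begin{equation*}
\tfrac{\partial^2\Phi}{\partial s\,\partial t}=\alpha_k\alpha_v\bigl[\Phi_{kv}(\Phi\e_k)(\e_v\T\Phi)+\Phi_{vk}(\Phi\e_v)(\e_k\T\Phi)\bigr].
\end{equation*}
Because $\Phi\ge\0$, each scalar $\Phi_{kv},\Phi_{vk}$ and each outer product on the right is entry-wise nonnegative, so $\partial^2\Phi/\partial s\,\partial t\ge\0$ throughout $[0,1]^2$. Two applications of the fundamental theorem of calculus then give
\begin{equation*}
\Phi(0,0)-\Phi(0,1)-\Phi(1,0)+\Phi(1,1)=\int_0^1\!\!\int_0^1\tfrac{\partial^2\Phi}{\partial s\,\partial t}\,ds\,dt\;\ge\;\0,
\end{equation*}
which, upon identifying $\Phi(0,0),\Phi(1,0),\Phi(0,1),\Phi(1,1)$ with $(L_\beta+\Gamma_{\mathcal{S}})^{-1},(L_\beta+\Gamma_{\mathcal{S}\cup\{k\}})^{-1},(L_\beta+\Gamma_{\mathcal{S}\cup\{v\}})^{-1},(L_\beta+\Gamma_{\mathcal{S}\cup\{k,v\}})^{-1}$, is exactly the first inequality in \eqref{eqMatrixSupermodular}. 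The argument carries over verbatim with $L_\beta$ replaced by $L_0=L$, provided $\mathcal{S}\cap\mathcal{V}_\alpha\neq\varnothing$ so that $L+\Gamma_{\mathcal{S}}$ remains a nonsingular M-matrix. The only substantive obstacle I anticipate is ensuring entry-wise nonnegativity of $\Phi$ uniformly over the interpolation square; once this M-matrix property is in place the signs in $\partial^2\Phi/\partial s\,\partial t$ align automatically, and no combinatorial walk-counting argument (as in Lemma~\ref{lemPVPVP}) is needed.
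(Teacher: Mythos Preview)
Your argument is correct and takes a genuinely different route from the paper's proof. The paper proceeds algebraically: for monotonicity it expands $(L_\beta+\Gamma_\mathcal{S})^{-1}=\sum_{i\ge 0}(D_\mathcal{S}^{-1}W)^iD_\mathcal{S}^{-1}$ and uses that $D_\mathcal{S}^{-1}$ is monotone in $\mathcal{S}$; for supermodularity it applies Sherman--Morrison to write both sides of \eqref{eqMatrixSupermodular} as rank-one corrections of $P=(L_\beta+\Gamma_\mathcal{S})^{-1}$, reduces to the scalar inequality $P_{iv}P_{vj}/(\alpha_v^{-1}+P_{vv})\ge Q_{iv}Q_{vj}/(\alpha_v^{-1}+Q_{vv})$, and finishes via the refined inverse-M-matrix bound $P_{ik}\ge P_{iv}P_{ii}^{-1}P_{vk}$ from Lemma~\ref{thm_inverseMmatrix}. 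Your interpolation approach sidesteps all of this: the key observation that the mixed partial $\partial^2\Phi/\partial s\,\partial t=\alpha_k\alpha_v[\Phi_{kv}(\Phi\e_k)(\e_v\T\Phi)+\Phi_{vk}(\Phi\e_v)(\e_k\T\Phi)]$ is entry-wise nonnegative requires only $\Phi\ge\0$, not the stronger McDonald inequality, and the double integral then delivers \eqref{eqMatrixSupermodular} directly. This is cleaner and more robust (it would extend immediately to simultaneous perturbations at several coordinates), whereas the paper's Sherman--Morrison computations have the side benefit of producing the explicit rank-one/rank-two update formulas that are reused in Remarks~\ref{remComplexityAlg1}--\ref{remComplexityAlg2}. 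One minor remark: your closing aside about avoiding ``walk-counting as in Lemma~\ref{lemPVPVP}'' is slightly off target, since the paper does not invoke Lemma~\ref{lemPVPVP} in its proof of Lemma~\ref{lemMatrixSupermodularity} either; that lemma is used only for the convexity result in Theorem~\ref{thmConvexity_G}.
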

	\begin{lem} \label{lemComposition} 
		If $F\!:\!2^\mathcal{V} \!\to\! \mathbb{R}^{N\times N}$ is decreasing and supermodular, and 
		$f\!:\!\mathbb{R}^{N\times N}\!\! \to\! \mathbb{R}$ is increasing and convex, then the composition $f\circ F$ is nonincreasing and supermodular.
	\end{lem}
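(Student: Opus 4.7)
The plan is to handle the two conclusions of the lemma separately.

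For monotonicity: if $\mathcal{S}\subseteq \mathcal{T}$, then $F(\mathcal{S})\geq F(\mathcal{T})$ entrywise since $F$ is decreasing, and $f$ being entrywise-increasing then yields $f(F(\mathcal{S}))\geq f(F(\mathcal{T}))$, so $f\circ F$ is nonincreasing.

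For supermodularity, fix $\mathcal{S}\subseteq \mathcal{T}$ and $v\in \mathcal{V}\setminus \mathcal{T}$ and abbreviate $A:=F(\mathcal{S})$, $B:=F(\mathcal{S}\cup\{v\})$, $C:=F(\mathcal{T})$, $D:=F(\mathcal{T}\cup\{v\})$. Supermodularity and monotonicity of $F$ give the entrywise bounds $A-B \geq C-D \geq 0$, $A\geq C$, and $B\geq D$. I would reach the target $f(A)-f(B)\geq f(C)-f(D)$ in two steps. First, since $A\geq B+(C-D)$ and $f$ is entrywise-increasing, $f(A)\geq f(B+(C-D))$, which reduces the claim to the ``parallelogram'' bound $f(B+\Delta)-f(B)\geq f(D+\Delta)-f(D)$ with $\Delta:=C-D\geq 0$ and $B\geq D$. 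Second, this bound is the assertion that the finite-difference map $\varphi(X):=f(X+\Delta)-f(X)$ is entrywise-nondecreasing in $X$; in the scalar case this is the familiar nondecreasing-secant-slope property of convex functions. To promote this to the matrix setting I would parameterize $X(t):=D+t(B-D)$ for $t\in[0,1]$ and examine the derivative
\begin{equation*}
\tfrac{d}{dt}\varphi(X(t)) = \langle \nabla f(X(t)+\Delta) - \nabla f(X(t)),\, B-D\rangle,
\end{equation*}
aiming to show it is nonnegative along the whole segment.

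The main obstacle is precisely this sign claim. Plain convexity of $f$ gives $\langle \nabla f(y)-\nabla f(x),\, y-x\rangle\geq 0$, which controls the gradient increment along the shift direction $\Delta$ but not along the independent direction $B-D$. For the linear cost $f(X)=\bb\T X\cc$ that defines $J_{\mathcal{K}}$ in problem (P), the obstacle dissolves at once: $f(A)-f(B)-f(C)+f(D)=\bb\T\bigl((A-B)-(C-D)\bigr)\cc\geq 0$ follows directly from the matrix supermodularity of $F$ established in Lemma~\ref{lemMatrixSupermodularity}. To close the general convex case, I would strengthen ``$f$ increasing and convex'' by also asking $\nabla f$ to be entrywise-nondecreasing in its matrix argument (equivalently, the Hessian of $f$ to have nonnegative entries); this yields $\nabla f(X(t)+\Delta)\geq \nabla f(X(t))$ entrywise along the parameterization above, making the inner product with $B-D\geq 0$ nonnegative and closing the gap.
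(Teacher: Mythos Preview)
Your monotonicity argument is fine. For supermodularity you take a different route from the paper: the paper writes $F(\mathcal{S})=a_1 F(\mathcal{S}\cup\mathcal{T})+(1-a_1)F(\mathcal{S}\cap\mathcal{T})$ and $F(\mathcal{T})=a_2 F(\mathcal{S}\cup\mathcal{T})+(1-a_2)F(\mathcal{S}\cap\mathcal{T})$ for scalars $a_1,a_2\in[0,1]$, deduces $a_1+a_2\geq 1$ from the supermodularity of $F$, applies Jensen's inequality to each line, and combines. Your approach instead reduces the claim to an increasing-differences inequality for $f$ along two independent nonnegative directions and attempts to extract it from convexity via a gradient parameterization.

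You are right that your argument does not close under the stated hypotheses, and in fact the lemma as stated is false. Take $\mathcal{V}=\{1,2\}$, set $F(\varnothing)=\diag(2,2)$, $F(\{1\})=\diag(0,2)$, $F(\{2\})=\diag(2,0)$, $F(\{1,2\})=\diag(0,0)$ (decreasing and modular in $\mathbb{R}^{2\times 2}$), and let $f(X)=X_{11}^2+X_{22}^2-X_{11}X_{22}+10\sum_{i,j}X_{ij}$, which is convex and entrywise-increasing on $[0,2]^{2\times 2}$; then $(f\circ F)(\varnothing)-(f\circ F)(\{1\})=20<24=(f\circ F)(\{2\})-(f\circ F)(\{1,2\})$, so $f\circ F$ is not supermodular. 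The paper's argument has the matching gap: a \emph{single} scalar $a_1$ placing the matrix $F(\mathcal{S})$ on the segment between $F(\mathcal{S}\cup\mathcal{T})$ and $F(\mathcal{S}\cap\mathcal{T})$ need not exist (here $\diag(0,2)$ is not on the segment from $\diag(0,0)$ to $\diag(2,2)$), and if one takes entrywise $a_1$'s the Jensen step no longer applies. Your proposed strengthening---requiring all mixed second partials of $f$ to be nonnegative, i.e., that $f$ itself be supermodular on the entrywise lattice $\mathbb{R}^{N\times N}$---is precisely the extra hypothesis that repairs both routes, and it is trivially satisfied by the linear maps $f(X)=\bb\T X\cc$ and $f(X)=\tr(X)$ that the paper actually invokes.
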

	
	The proofs of these lemmas are given in Appendices~\ref{proofMatrixSupermodularity} and \ref{proofComposition} respectively. The case $\mathcal{S} \!=\! \varnothing$ is included in Lemma~\ref{lemMatrixSupermodularity}  since $(L_\beta\!+\! \Gamma_{\varnothing})^{-1} \!=\! +\infty$ if $\boldsymbol{\beta} \!=\! \0$. 
	Now applying Lemmas \ref{lemMatrixSupermodularity} and \ref{lemComposition} with $F(\mathcal{K}) \!=\! (L_\beta \!+\! \Gamma_\mathcal{K})^{-1}$ and $f(X) {=} \bb\T X\cc$ yields the following: 
	\begin{thm}\label{thmSupermodularJK}
		$J_{\mathcal{K}}$ is supermodular and nonincreasing in $\mathcal{K}$. 
	\end{thm}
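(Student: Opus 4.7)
My plan is to derive the theorem as a direct consequence of the two preceding lemmas by viewing $J_{\mathcal{K}}$ as a composition of a matrix-valued set function and a scalar linear functional. Specifically, I would set $F(\mathcal{K}) := (L_\beta + \Gamma_{\mathcal{K}})^{-1}$ and $\varphi(X) := \bb\T X \cc$, so that $J_{\mathcal{K}} = \varphi(F(\mathcal{K}))$, and then invoke Lemma~\ref{lemComposition}. The entire argument reduces to checking the two hypotheses of that lemma for $F$ and $\varphi$.

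For $F$, Lemma~\ref{lemMatrixSupermodularity} gives exactly what is needed: $F$ takes values in the cone of entrywise nonnegative matrices, is nonincreasing in $\mathcal{K}$ (with respect to the entrywise order), and satisfies the matrix supermodularity inequality~\eqref{eqMatrixSupermodular}. For $\varphi$, the two required properties are elementary: since $\bb, \cc \in \mathbb{R}^N_+$, the map $X \mapsto \bb\T X \cc = \sum_{i,j} b_i c_j X_{ij}$ is a nonnegative linear combination of the entries of $X$, hence monotone nondecreasing in the entrywise order on $\mathbb{R}^{N\times N}$; and being linear, it is trivially convex.

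Applying Lemma~\ref{lemComposition} then yields that $\mathcal{K} \mapsto \varphi(F(\mathcal{K})) = \bb\T(L_\beta + \diag(\boldsymbol\alpha_{\mathcal{K}}))^{-1}\cc = J_{\mathcal{K}}$ is nonincreasing and supermodular in $\mathcal{K}$, which is the claim. I would also briefly note that the statement covers the single-leader case (P1) where $\boldsymbol{\beta}=\0$: Lemma~\ref{lemMatrixSupermodularity} is explicitly asserted to hold for $L_0$ in place of $L_\beta$ on the set of $\mathcal{K}$ for which $L_0 + \Gamma_{\mathcal{K}}$ is invertible (i.e., $\mathcal{K} \cap \mathcal{V}_\alpha \neq \varnothing$), and the value $J_{\varnothing} = +\infty$ in that case is consistent with monotonicity.

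There is no real obstacle beyond verifying monotonicity and convexity of $\varphi$, which are immediate; the substantive work has been offloaded to Lemmas~\ref{lemMatrixSupermodularity} and~\ref{lemComposition}, whose proofs are deferred to the appendices. Consequently the proof of Theorem~\ref{thmSupermodularJK} should be only a few lines of displayed composition, and I would write it as a short direct verification rather than introducing any additional machinery.
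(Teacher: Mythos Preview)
Your proposal is correct and matches the paper's own argument essentially verbatim: the paper likewise sets $F(\mathcal{K}) = (L_\beta + \Gamma_\mathcal{K})^{-1}$ and $f(X) = \bb\T X\cc$ and obtains Theorem~\ref{thmSupermodularJK} as an immediate application of Lemmas~\ref{lemMatrixSupermodularity} and~\ref{lemComposition}. No additional work is needed beyond what you outlined.
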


	
	Note that \cite{Lin11} considers the problem of selecting a number of agents as leaders so as to minimize the overall variance in an undirected unweighted network subject to stochastic disturbances. It can be verified that the cost function therein is equivalent to $\tr\big( (L + \diag(\boldsymbol\alpha_{\mathcal{K}} ))^{-1} \big)$, which is $(f\circ F)(\mathcal{K})$ with $F(\mathcal{K}) = (L+ \Gamma_\mathcal{K})^{-1}$ and  $f(X) = \tr(X)$. Thus,  we can immediately conclude  supermodularity of this cost function; this  was not shown in \cite{Lin11}.
	
	
	\vspace{-2mm}
	\subsection{Greedy Algorithms and Ratio Bounds}
	Having established supermodularity of $J_{\mathcal{K}}$, we now introduce our greedy algorithms and show their ratio bounds. For convenience, $J_{\mathcal{K}}$ and $J(\mathcal{K})$ are used interchangeably. 
	Our Algorithm~\ref{alg_HeuSupPhase1}, whose output is denoted by $\mathcal{K}^{G}$, is based on the greedy algorithm in \cite{Nemhauser78}. 

	\begin{algorithm2e}[ht]
		\DontPrintSemicolon
		$\text{Init: }\mathcal{K}^{G} \leftarrow \varnothing$\;
		\For{$i = 1:K$}{
			$k_i^* \gets \arg\min_{v\not\in\mathcal{K}^{G}} J({ \mathcal{K}^{G}\cup \{v\} })$\;
			$\mathcal{K}^{G} \gets \mathcal{K}^{G} \cup \{k_i^*\}$\;
		}
		\text{Return: }$\mathcal{K}^{G}$\;
		\caption{\emph{Greedy Adding} $\mathcal{K}^{G}$}\label{alg_HeuSupPhase1}
	\end{algorithm2e}
	\vspace{-1mm}


	\begin{rem}\label{remComplexityAlg1}\emph{(Complexity of Alg.~\ref{alg_HeuSupPhase1})} 
		Without exploiting the structure of $J_\mathcal{K}$, Algorithm~1 requires $O(N^2)$ memory and $O(KN^4)$ operations (due to  matrix inversion). We can use \textit{Rank-1 updates} or \textit{power-iteration method} to alleviate this burden. In particular, at any iteration, let $\mathcal{S}$ denote the current set $\mathcal{K}^{G}$ and let $P {=} (L_\beta{+} \Gamma_\mathcal{S})^{-1}$. By the Woodbury identity (see Lemma~\ref{thmWoodbury} in Appendix~A), 
		$(L_\beta{+} \Gamma_{\mathcal{S} \cup \{v\}})^{-1} {=} P {-} {P_{(v)}P^{(v)}}{ (\alpha_v^{-1} {+} P_{vv})^{-1} }$ is a rank-1 update from $P$. 
		Then $J_{\mathcal{S}} \!-\! J_{\mathcal{S} \cup  \{v\}} \!=\! {\bb\T P_{(v)}P^{(v)}\cc}/{ (\alpha_v^{-1} \!+\! P_{vv})} \!=:\! \Delta J(v,\mathcal{S}).$
		Thus, knowing $P$, it requires $O(N)$ operations to find $\Delta J(v,\mathcal{S})$ and hence $O(N^2 {-} N|\mathcal{S}|)$  to find $v^* = \arg\max_{v\in \mathcal{V}{\setminus}\mathcal{S}} \Delta J(v,\mathcal{S})$. Then $(L_\beta \!+\! \Gamma_{\mathcal{S} \cup \{v^*\}})^{-1}$ is then obtained from $P$ by a rank-1 update, taking $O(N^2)$. 
		The initial case $\mathcal{S}=\varnothing$ corresponds to $P = L_{\beta}^{-1}$ (or pseudo-inverse of $L$ if $\boldsymbol{\beta}=\0$) requiring $O(N^3)$ operations. 
		To sum up, using this scheme, the algorithm requires $O(KN^2+N^3)$ operations and $O(N^2)$ memory space. 
		For large networks, 
		we can exploit the sparsity structure of $L_\beta$ in connection with the power-iteration method as shown in Remark~\ref{remGradientEva}. 
	\end{rem}

	Note that in our previous work \cite{Mai16a}, the same greedy algorithm using rank-1 updates has been applied for the case of problem (P1). 
	In this paper, we use this algorithm for (P) with more general setting and provide proofs of the supermodularity of $J_{\mathcal{K}}$ and the ratio bounding the error incurred, which were not included in \cite{Mai16a}. We now provide the approximation ratio of Algorithm \ref{alg_HeuSupPhase1}. 
	Let $Z(\mathcal{S})$ be nondecreasing submodular in $\mathcal{S}$. The curvature of $Z$ with respect to a set $\mathcal{P}$ is defined as (see, e.g., \cite{Conforti84}) $\sigma := \textstyle 1-\min_{x\in \mathcal{P}}\frac{Z(\mathcal{P}{\setminus}\{x\}) - Z(\mathcal{P}) }{Z(\varnothing) - Z(\{x\})}.$
	
	\begin{thm}(\cite[Cor.~5.7]{Conforti84})\label{thmSubmodularBound}
		Let $Z(\mathcal{S})$ be nondecreasing submodular in $\mathcal{S}$ with $Z(\varnothing) = 0$. Let $\mathcal{S}^G$ and $\mathcal{S}^*$ denote the greedy solution and an optimal one to problem $\max \{Z(\mathcal{S}): \mathcal{S} \subseteq \mathcal{P}, |\mathcal{S}|\le K \}$. Then  
		\begin{align}
		\textstyle {Z(\mathcal{S}^G)}/{Z(\mathcal{S}^*)} \ge 
		\frac{1}{\sigma}\big( 1-(1-\frac{\sigma}{K})^K \big) =: R_{\sigma,K}, \label{eqGreedyBoundCurvature}
		\end{align}
		where $\sigma$ is the curvature of $Z$ with respect to $\mathcal{P}$.
	\end{thm}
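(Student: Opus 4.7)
The plan is to reproduce the classical Conforti--Cornu\'ejols argument, which refines the Nemhauser--Wolsey--Fisher $(1-e^{-1})$ bound by exploiting the curvature $\sigma$. Let $\mathcal{S}^G_0 = \varnothing \subset \mathcal{S}^G_1 \subset \cdots \subset \mathcal{S}^G_K = \mathcal{S}^G$ be the chain of greedy sets, and let $\mathcal{S}^* = \{o_1,\ldots,o_K\}$ be an optimal solution. Define $\delta_i := Z(\mathcal{S}^G_i) - Z(\mathcal{S}^G_{i-1})$ and $\rho_i := Z(\mathcal{S}^*) - Z(\mathcal{S}^G_i)$; the goal is to derive a linear recurrence for $\rho_i$ whose solution yields \eqref{eqGreedyBoundCurvature}.

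The first step is the standard greedy/submodularity bound. By submodularity, $Z(\mathcal{S}^G_i \cup \mathcal{S}^*) - Z(\mathcal{S}^G_i) \le \sum_{j=1}^K \bigl[Z(\mathcal{S}^G_i \cup \{o_j\}) - Z(\mathcal{S}^G_i)\bigr]$, and the greedy rule forces each of these marginal gains to be at most $\delta_{i+1}$. Combined with monotonicity $Z(\mathcal{S}^G_i \cup \mathcal{S}^*) \ge Z(\mathcal{S}^*)$, this gives the classical inequality $\delta_{i+1} \ge \tfrac{1}{K}\bigl[Z(\mathcal{S}^G_i \cup \mathcal{S}^*) - Z(\mathcal{S}^G_i)\bigr]$. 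Without curvature information, replacing $Z(\mathcal{S}^G_i\cup\mathcal{S}^*)$ by $Z(\mathcal{S}^*)$ already yields $\rho_{i+1} \le (1 - 1/K)\rho_i$ and hence the $(1-e^{-1})$ ratio.

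The second (and sharper) step uses the curvature to obtain a set-level comparison. The definition of $\sigma$ gives, for every singleton $\{x\} \subseteq \mathcal{P}$, the one-element inequality $Z(\mathcal{P}) - Z(\mathcal{P}\setminus\{x\}) \ge (1-\sigma)\bigl[Z(\{x\}) - Z(\varnothing)\bigr]$. The key technical lemma (Proposition~2.1 of Conforti--Cornu\'ejols) extends this by a telescoping argument along any ordering of the elements of $\mathcal{S}^G_i \setminus \mathcal{S}^*$, together with submodularity applied to each removal, to the set version
\begin{equation*}
Z(\mathcal{S}^G_i \cup \mathcal{S}^*) - Z(\mathcal{S}^*) \ge (1-\sigma)\bigl[Z(\mathcal{S}^G_i) - Z(\mathcal{S}^G_i \cap \mathcal{S}^*)\bigr] \ge (1-\sigma)\bigl[Z(\mathcal{S}^G_i) - Z(\mathcal{S}^*\cap\mathcal{S}^G_i)\bigr].
\end{equation*}
Plugging this into the greedy bound and simplifying (using $Z(\mathcal{S}^*\cap\mathcal{S}^G_i) \le Z(\mathcal{S}^G_i)$ or a direct algebraic manipulation on $\sigma Z(\mathcal{S}^G_i) + (1-\sigma)Z(\mathcal{S}^*)$) produces the refined recursion $\sigma\,\rho_{i+1} \le (1 - \sigma/K)\,\sigma\,\rho_i + \text{correction}$ that telescopes to $Z(\mathcal{S}^G_K) \ge \tfrac{1}{\sigma}\bigl(1-(1-\sigma/K)^K\bigr)Z(\mathcal{S}^*)$.

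The hard part is exactly this second step: obtaining a \emph{set}-level curvature inequality from the element-wise definition of $\sigma$. The telescoping requires carefully iterating the single-element bound along a chosen order, keeping track of which intermediate set each inequality is applied to, and invoking submodularity to ensure each step's marginal is bounded correctly. Once this lemma is in hand, the rest is arithmetic manipulation of a linear recurrence. Since this is a direct invocation of \cite[Cor.~5.7]{Conforti84}, one can simply cite the result; the sketch above indicates why the stated bound holds and in particular recovers $1-e^{-1}$ in the limit $\sigma \to 1$.
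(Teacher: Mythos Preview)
The paper does not supply a proof of this theorem at all: it is stated purely as a citation of \cite[Cor.~5.7]{Conforti84} and immediately used in the proof of Theorem~\ref{thmRatioBoundAlg1}. Your proposal therefore goes beyond what the paper does, by sketching the Conforti--Cornu\'ejols argument rather than simply invoking it.

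As a sketch your outline is broadly faithful to the original: the first step (greedy choice plus submodularity gives $\delta_{i+1}\ge \tfrac{1}{K}[Z(\mathcal{S}^G_i\cup\mathcal{S}^*)-Z(\mathcal{S}^G_i)]$) is standard, and your set-level curvature inequality $Z(\mathcal{S}^G_i\cup\mathcal{S}^*)-Z(\mathcal{S}^*)\ge(1-\sigma)[Z(\mathcal{S}^G_i)-Z(\mathcal{S}^G_i\cap\mathcal{S}^*)]$ is correct and is obtained exactly as you say, by telescoping the element-wise curvature bound along $\mathcal{S}^G_i\setminus\mathcal{S}^*$ and using submodularity. The one place your sketch is genuinely hand-wavy is the last paragraph: combining these two inequalities leaves a residual $Z(\mathcal{S}^G_i\cap\mathcal{S}^*)$ term, and the passage to the clean recursion $\rho_i\le(1-\sigma/K)^i\rho_0/\sigma$-type bound requires a more careful accounting (in Conforti--Cornu\'ejols this is handled via an LP/worst-case analysis rather than a simple recurrence). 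Since you explicitly fall back on the citation for this step, and the paper does the same, there is no discrepancy to flag.
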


	\begin{thm}\label{thmRatioBoundAlg1}
		Let Assumptions \ref{asmp_fixedGraph} and \ref{asmp_alpha} hold. Let $\mathcal{K}^*$ denote an optimal solution to (P) and  let $\mathcal{K}^{G}$ be the output of Algorithm~\ref{alg_HeuSupPhase1}. 
		\begin{itemize}
			\item[(i)] 	Let $v^* = \arg\min_{v\in \mathcal{V}_{\alpha}} J(\{v\})$. If $\boldsymbol\beta = \0$, then 
			\begin{align}
			{J(\{v^*\}) -  J(\mathcal{K}^G)} 
			\ge R_{\sigma,K-1} {\big( J(\{v^*\})  -  J(\mathcal{K}^*)\big)},
			\label{eqRatioBoundJ1}
			\end{align}
			where $\sigma = 1-\min_{x\in \mathcal{V}_{\alpha}{\setminus} \{v^*\} } \frac{J(\mathcal{V}_{\alpha}\setminus\{x\}) - J(\mathcal{V}_{\alpha})}{J(\{v^*\}) - J(\{v^*,x\})}$.
			\item[(ii)] 	If $\boldsymbol\beta \neq \0$, then 
			\begin{align}
			{J(\varnothing) - J(\mathcal{K}^G)}
			\ge R_{\sigma,K}{\big( J(\varnothing) - J(\mathcal{K}^*) \big)},
			\label{eqRatioBoundJ2}
			\end{align}
			where $\sigma = 1-\min_{x\in \mathcal{V}_{\alpha}} \frac{J(\mathcal{V}_{\alpha}\setminus\{x\}) - J(\mathcal{V}_{\alpha})}{J(\varnothing) - J(\{x\})}$.
		\end{itemize} 
	\end{thm}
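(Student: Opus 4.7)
Both parts reduce the supermodular minimization of $J$ to a nondecreasing submodular maximization problem so that Theorem~\ref{thmSubmodularBound} applies. Since $\alpha_v=0$ for $v\in\mathcal{V}\setminus\mathcal{V}_\alpha$ leaves $\diag(\boldsymbol{\alpha}_\mathcal{K})$ unchanged, $J$ depends only on $\mathcal{K}\cap\mathcal{V}_\alpha$, so both the greedy output and any optimum can be restricted to $\mathcal{V}_\alpha$ without loss of generality, which is what justifies taking $\mathcal{P}=\mathcal{V}_\alpha$ (resp.\ $\mathcal{V}_\alpha\setminus\{v^*\}$) in the curvature formulas.

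\textbf{Part (ii).} Because $\boldsymbol{\beta}\neq\mathbf{0}$, $L_\beta$ is a nonsingular M-matrix, so $J(\varnothing)=\bb\T L_\beta^{-1}\cc$ is finite. Define $Z(\mathcal{S}):=J(\varnothing)-J(\mathcal{S})$; by Theorem~\ref{thmSupermodularJK}, $Z$ is nondecreasing and submodular with $Z(\varnothing)=0$. The step $k_i^*=\arg\min_{v\notin\mathcal{K}^G}J(\mathcal{K}^G\cup\{v\})$ in Algorithm~\ref{alg_HeuSupPhase1} is identical to $\arg\max_{v\notin\mathcal{K}^G}[Z(\mathcal{K}^G\cup\{v\})-Z(\mathcal{K}^G)]$, so $\mathcal{K}^G$ coincides with the standard greedy solution for maximizing $Z$ over $\{\mathcal{S}\subseteq\mathcal{V}_\alpha:|\mathcal{S}|\le K\}$. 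Applying Theorem~\ref{thmSubmodularBound} with $\mathcal{P}=\mathcal{V}_\alpha$ yields $Z(\mathcal{K}^G)\ge R_{\sigma,K}\,Z(\mathcal{K}^*)$; substituting back $Z=J(\varnothing)-J$ on both sides and simplifying the numerator and denominator inside the curvature definition recovers \eqref{eqRatioBoundJ2} together with the stated formula for $\sigma$.

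\textbf{Part (i).} When $\boldsymbol{\beta}=\mathbf{0}$ we have $J(\varnothing)=+\infty$, so the reference point must be shifted from $\varnothing$ to $\{v^*\}$. Since $J(\{v\})=+\infty$ whenever $v\notin\mathcal{V}_\alpha$, the first iteration of Algorithm~\ref{alg_HeuSupPhase1} is forced to select some $v^*\in\arg\min_{v\in\mathcal{V}_\alpha}J(\{v\})$, after which $v^*\in\mathcal{K}^G$. Set $J'(\mathcal{T}):=J(\{v^*\}\cup\mathcal{T})$ and $Z(\mathcal{T}):=J'(\varnothing)-J'(\mathcal{T})$ for $\mathcal{T}\subseteq\mathcal{V}\setminus\{v^*\}$; then $J'$ inherits supermodularity and monotonicity from $J$, so $Z$ is nondecreasing submodular with $Z(\varnothing)=0$. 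The remaining $K-1$ iterations of Algorithm~\ref{alg_HeuSupPhase1} coincide with the standard greedy for maximizing $Z$ over $\{\mathcal{T}\subseteq\mathcal{V}_\alpha\setminus\{v^*\}:|\mathcal{T}|\le K-1\}$. Theorem~\ref{thmSubmodularBound} then gives $J(\{v^*\})-J(\mathcal{K}^G)\ge R_{\sigma,K-1}\bigl(J(\{v^*\})-J'(\mathcal{T}^*)\bigr)$, with the stated expression for $\sigma$ following by direct substitution.

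\textbf{Principal obstacle.} The delicate step is bridging the gap between the Conforti--Cornuejols right-hand side, which naturally involves the constrained optimum $\min\{J(\mathcal{K}):v^*\in\mathcal{K},\,|\mathcal{K}|\le K\}=J(\{v^*\}\cup\mathcal{T}^*)$, and the unconstrained optimum $J(\mathcal{K}^*)$ that appears in the claim. If $v^*\in\mathcal{K}^*$ or $|\mathcal{K}^*|<K$, then $\mathcal{K}^*\cup\{v^*\}$ is itself feasible and $J(\mathcal{K}^*\cup\{v^*\})\le J(\mathcal{K}^*)$ by monotonicity, settling the comparison. The residual case $v^*\notin\mathcal{K}^*$ with $|\mathcal{K}^*|=K$ requires an exchange: starting from $\mathcal{K}^*\cup\{v^*\}$ (size $K{+}1$), I would combine the supermodularity inequality of Theorem~\ref{thmSupermodularJK} with the extremality $J(\{v^*\})\le J(\{v\})$ for every $v\in\mathcal{V}_\alpha$ to locate some $u\in\mathcal{K}^*$ whose removal does not push $J$ above $J(\mathcal{K}^*)$; the resulting $K$-subset containing $v^*$ then plays the role of $\mathcal{K}^*$ in the greedy bound and completes the proof of \eqref{eqRatioBoundJ1}.
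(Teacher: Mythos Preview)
Your reduction to Theorem~\ref{thmSubmodularBound} via $Z(\mathcal{S})=J(\varnothing)-J(\mathcal{S})$ for part~(ii) and $Z(\mathcal{T})=J(\{v^*\})-J(\{v^*\}\cup\mathcal{T})$ for part~(i) is exactly the paper's argument; its entire proof consists of those two definitions followed by ``apply Theorem~\ref{thmSubmodularBound} and rearrange.'' You have, moreover, put your finger on a genuine issue that the paper's proof does not address: the Conforti--Cornu\'ejols bound applied to the shifted function $Z$ compares $\mathcal{K}^G$ only against the best $K$-set \emph{containing $v^*$}, whereas \eqref{eqRatioBoundJ1} is stated for the unconstrained optimum $\mathcal{K}^*$.

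However, your proposed repair---an exchange argument using only supermodularity of $J$ together with the extremality $J(\{v^*\})\le J(\{v\})$---cannot succeed at that level of generality. Take $\mathcal{V}_\alpha=\{a,b,c\}$, $K=2$, and the nonincreasing supermodular function with $J(\varnothing)=+\infty$, $J(\{a\})=-6$, $J(\{b\})=J(\{c\})=-5$, $J(\{a,b\})=J(\{a,c\})=-8$, $J(\{b,c\})=J(\{a,b,c\})=-10$ (a coverage-type example). Here $v^*=a$, $\mathcal{K}^*=\{b,c\}$, and for \emph{every} $u\in\mathcal{K}^*$ one has $J((\mathcal{K}^*\cup\{v^*\})\setminus\{u\})=-8>-10=J(\mathcal{K}^*)$, so no admissible exchange exists. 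Worse, inequality~\eqref{eqRatioBoundJ1} itself fails in this example: the curvature formula in part~(i) gives $\sigma=0$, hence $R_{\sigma,K-1}=1$, yet $J(\{a\})-J(\mathcal{K}^G)=2<4=J(\{a\})-J(\mathcal{K}^*)$. Thus either additional structure of the specific cost $J_\mathcal{K}=\bb\T(L+\diag(\boldsymbol\alpha_\mathcal{K}))^{-1}\cc$ must be exploited beyond Theorem~\ref{thmSupermodularJK}, or part~(i) has to be read with $\mathcal{K}^*$ ranging only over sets containing $v^*$; the paper's proof is silent on the point, and your exchange idea alone cannot close the gap.
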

	\begin{proof}
		For case (i), let $Z(\mathcal{S}) \!=\! J_{\{v^*\}} {-} J_{\mathcal{S}\cup \{v^*\}}$ for any $\mathcal{S} {\subseteq} \mathcal{V}_{\alpha}{\setminus}\{v^*\}$. It can be shown that $Z$ is nondecreasing, submodular with curvature $\sigma$  and $Z(\varnothing) =0$. Applying Theorem \ref{thmSubmodularBound} and rearranging terms yield \eqref{eqRatioBoundJ1}. For (ii), we have \eqref{eqRatioBoundJ2} follows from Theorem \ref{thmSubmodularBound} with $\sigma$ being the curvature of $Z(\mathcal{S}) := J(\varnothing) - J(\mathcal{S})$ for any $\mathcal{S} \subseteq \mathcal{V}_{\alpha}$.
	\end{proof}
	
	Note that $R_{\sigma,K} >  (1-e^{-\sigma})/{\sigma} > 1-e^{-1}$ for any $\alpha \in (0,1)$ and $K\ge 1$. Thus in general  $R_{\sigma,K}$ is tighter than the constant bound $(1-e^{-1})$ established in \cite{Nemhauser78} (and also \cite{Gionis13,Clark14AC,Borkar15}). 
	
	

	In the following, we construct another algorithm  (called \textit{greedy swapping}), which contains Algorithm \ref{alg_HeuSupPhase1} as a special case and is able to practically improve accuracy. 
	The idea is still to greedily select one ``best" node at a time, but we additionally employ a particular strategy of the Interchange Heuristic \cite{Nemhauser78}: to repeatedly replace every node in $\mathcal{K}$ by another node in $\mathcal{V}{\setminus}\mathcal{K}$ (or more precisely $\mathcal{V}_{\alpha}{\setminus}\mathcal{K}$ ) if and only if the swapping \textit{results in the largest decrease} in the objective function. 
	%

	\begin{algorithm2e}[tbh]
		\DontPrintSemicolon
		\text{Init:}  $\mathcal{K}_0^{S} \subseteq \mathcal{V}_{\alpha}, |\mathcal{K}_0^{S}|\le K$\;
		\For{$m=1:M$ or until $\mathcal{K}_{m}^{S}=\mathcal{K}_{m+1}^{S}$}{
			$\mathcal{S} \gets \varnothing, \mathcal{T} =\{ t_1, t_2, \ldots\} \gets \mathcal{K}_{m-1}^{S}$\;
			\For{$i = 1:K$}
			{$\mathcal{T} \gets \mathcal{T} {\setminus} \{t_i\}$, 
				$\quad\mathcal{U} \gets \mathcal{V}\setminus\mathcal{S} \cup \mathcal{T} $\;
				$t_i^* \gets  \arg\min_{v\in \mathcal{U}}  J({ \mathcal{S} \cup \{v\} \cup \mathcal{T}  })$\;
				$\mathcal{S} \gets \mathcal{S} \cup \{t_i^*\}$\;
			}
			$\mathcal{K}_{m}^{S} \gets \mathcal{S}$\;
		}
		\text{Return: } $\mathcal{K}_M^{S}$
		\caption{\emph{Greedy Swapping} $\mathcal{K}_{M}^{S} := \mathrm{GSwap}(\mathcal{K}_{0}^{S},M)$ }\label{alg_HeuSupPhase2}
		\vspace{-0mm}
	\end{algorithm2e}
	\vspace{0mm}

	Note that \cite{Joshi09} and \cite{Lin11} also employ the interchange heuristic 
	(without supermodularity property and approximation bound)
	but swapping occurs whenever an improvement of the cost function is found, which in theory can require an exponential number of exchanges to reach a local optimizer. 
	Our algorithm tries to avoid this by swapping in the direction of the steepest descent coordinate. 

	\begin{rem}\label{remComplexityAlg2}\emph{(Complexity of Alg. \ref{alg_HeuSupPhase2})} 
		For simplicity, we use a cyclic selection scheme in revising the set $\mathcal{K}^S$ in each cycle. 
		Each cycle (other than the first) requires $(KN - K^{2})$ function evaluations. That of the first cycle depends on $|\mathcal{K}_0^S|$, but is no more than $KN - \frac{K(K-1)}{2}$. Again, 
		by using the power-iteration method, we can avoid the $O(N^2)$ memory requirement as shown in Remark \ref{remComplexityAlg1}. (For very large networks, one could select (e.g., randomly) a subset $\mathcal{U} \subsetneq \mathcal{V}{\setminus} (\mathcal{S}\cup\mathcal{T})$ of manageable sizes in step 5). For not too large networks, we can employ the Woodbury matrix identity (see Lemma~\ref{thmWoodbury} in Appendix~A) for rank-2 updates. 
		Specifically, suppose we want to check for a possible swap between $t\in \mathcal{P}:=\mathcal{T}\cup \mathcal{S}$ with some $v\in\mathcal{U}:= \mathcal{V}{\setminus}\mathcal{P}$. Let $P= (L_\beta+ \Gamma_{\mathcal{P}})^{-1}$ and $E_{(tv)}=[\e_t,\e_v]$, where $\e_t$ is the $t$-th unit vector in $\RN$. Then $(L_\beta \!+\! \Gamma_{\mathcal{P}{\setminus}\{t\} \cup \{v\}})^{-1}$ equals
		\begin{align}
		& \textstyle  P \!-\! PE_{(tv)}\begin{bmatrix}
		P_{tt}\!-\!\alpha_t^{-1} & P_{tv} \\
		P_{vt} & P_{vv}\!+\!\alpha_v^{-1}
		\end{bmatrix}^{-1} \! E_{(tv)}\T P. \label{eqRank2Update}
		\end{align}
		Thus, $\Delta^2 J(-t,v,\mathcal{P}) {:=} J(\mathcal{P}) {-} J(\mathcal{P}{\cup} \{v\}{\setminus}\{t\})$ can be computed as 
		$[\bb\T P_{(t)}, \bb\T P_{(v)}] \begin{bmatrix}
		P_{tt} -\alpha_t^{-1}& P_{tv} \\
		P_{vt} & P_{vv} + \alpha_v^{-1}
		\end{bmatrix}^{-1}\begin{bmatrix}
		P^{(t)}\cc\\
		P^{(v)}\cc
		\end{bmatrix}$ 
		which takes  $O(N)$ operations provided that $P$ is known. Hence, finding $v^* {=} \arg\max_{v\in \mathcal{V}{\setminus}\mathcal{P}} \Delta^2 J(-t_i,v,\mathcal{S})$ requires $O(N^2{-}NK)$ operations and if a swap is performed, then $(L_\beta+ \Gamma_{\mathcal{P}{\setminus}\{t_i\} \cup \{v^*\}})^{-1}$ is computed from $P$ by a rank-2 update \eqref{eqRank2Update}, which takes $O(N^2)$. 
		(Note that the foregoing calculation resulting in the swapping selection above is also more computationally expensive than finding a possible greedy swap; which is also one of the reasons we opt for the greedy swapping strategy instead of the swapping method used in \cite{Joshi09} and \cite{Lin11}.)
		During each cycle, at most $K$ swaps can be carried out, taking $O(KN^2)$ operations. 
		For the initial cycle, if $P$ is not supplied, then its computation costs at most $O(N^3)$. 
		Thus, in general, for $M$ cycles,  Algorithm~\ref{alg_HeuSupPhase2} takes $O(MKN^2 + N^3)$ operations.
		However, from our simulations, a good value of $M$ is usually small (say 2-3). 
	\end{rem}
	

	\begin{thm}\label{thmAlgorithm2} \emph{(Properties of Alg. \ref{alg_HeuSupPhase2})} 
		Let  $\{\mathcal{K}_m^S\}_0^M$ denote the sequence of approximate solutions generated by Algorithm~\ref{alg_HeuSupPhase2}.
		\begin{itemize}
			\item[(i)] 	If $\mathcal{K}_0^S = \varnothing$, then $\mathcal{K}_1^S \equiv \mathcal{K}^{G}$--the output of Algorithm \ref{alg_HeuSupPhase1}.  
			\item[(ii)]  For any $m\ge 0$ and $\mathcal{K}_0^S \subseteq \mathcal{V}_{\alpha}$, 
			$J(\mathcal{K}_{m+1}^S) \le J(\mathcal{K}_m^S).$ 
			In fact, let $m^*$ denote the smallest index such that $\mathcal{K}_{m^*}^S = \mathcal{K}_{m^*+1}^S$, then 
			$J(\mathcal{K}_{m}^S) > J(\mathcal{K}_{m+1}^S), \quad\forall m< m^*.$
			
			\item[(iii)] Let $v^* {=} \arg\min_{v\in \mathcal{V}_{\alpha}} J^{(1)}(\{v\})$. For any $\mathcal{K}_0^S \subseteq \mathcal{V}_{\alpha}$, we have 
			$\frac{J^{(1)}(\{v^*\}) - J^{(1)}(\mathcal{K}_{m^*}^S)}{J^{(1)}(\{v^*\}) - J^{(1)}(\mathcal{K}^*)}
			> \frac{1}{2} \text{~and~}
			\frac{1 - J^{(2)}(\mathcal{K}_{m^*}^S)}{1 - J^{(2)}(\mathcal{K}^*)}
			>\frac{1}{2}.$
		\end{itemize}
	\end{thm}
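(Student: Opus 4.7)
\emph{Part (i).} This will follow by direct inspection of Algorithm~\ref{alg_HeuSupPhase2}. With $\mathcal{K}_0^S=\varnothing$, the set $\mathcal{T}$ is empty at the start of cycle $m=1$ and remains empty throughout (the deletions $\mathcal{T}\setminus\{t_i\}$ are vacuous). Consequently each inner iteration selects $t_i^* = \arg\min_{v\in\mathcal{V}\setminus\mathcal{S}} J(\mathcal{S}\cup\{v\})$, which is exactly the greedy increment of Algorithm~\ref{alg_HeuSupPhase1}; iterating for $i=1,\ldots,K$ yields $\mathcal{K}_1^S=\mathcal{K}^G$.

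\emph{Part (ii).} The key observation is that at the start of any inner iteration, the element $t_i$ has just been removed from $\mathcal{T}$ and did not belong to $\mathcal{S}$, so $t_i\in\mathcal{U}$ is itself an admissible candidate. The $\arg\min$ therefore returns a value no larger than $J(\mathcal{S}\cup\{t_i\}\cup\mathcal{T})$, the $J$-value of the size-$K$ working set before the step. Chaining these $K$ weak inequalities gives $J(\mathcal{K}_m^S)\le J(\mathcal{K}_{m-1}^S)$. Adopting the natural tie-breaking convention --- that $\arg\min$ returns the incumbent $t_i$ whenever it achieves the minimum, consistent with how the termination predicate $\mathcal{K}_{m^*}^S=\mathcal{K}_{m^*+1}^S$ is tested --- any change of the set during a cycle must coincide with a strict decrease at some inner step, yielding $J(\mathcal{K}_{m+1}^S)<J(\mathcal{K}_m^S)$ for all $m<m^*$.

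\emph{Part (iii).} The termination $\mathcal{K}_{m^*}^S=\mathcal{K}_{m^*+1}^S$ is equivalent to $1$-swap local optimality:
\begin{equation*}
J(\mathcal{K}_{m^*}^S)\le J\bigl(\mathcal{K}_{m^*}^S\cup\{v\}\setminus\{t\}\bigr)\quad\forall\, t\in\mathcal{K}_{m^*}^S,\ v\in\mathcal{V}\setminus\mathcal{K}_{m^*}^S.
\end{equation*}
The plan is to normalize $J$ to a monotone nonnegative submodular function and then invoke the classical $1/2$ guarantee of Fisher--Nemhauser--Wolsey for $1$-swap local maxima under a uniform matroid. For (P2) I would take $Z(\mathcal{S}):=J^{(2)}(\varnothing)-J^{(2)}(\mathcal{S})=1-J^{(2)}(\mathcal{S})$; by Theorem~\ref{thmSupermodularJK} this is nondecreasing submodular with $Z(\varnothing)=0$, and the displayed inequality is exactly $1$-swap local optimality of $Z$. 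For (P1), where $J^{(1)}(\varnothing)=\infty$, the natural normalization is $Z(\mathcal{S}):=J^{(1)}(\{v^*\})-J^{(1)}(\mathcal{S}\cup\{v^*\})$ defined on $\mathcal{V}\setminus\{v^*\}$. The standard argument then pairs each $v_i\in\mathcal{K}^*\setminus\mathcal{K}_{m^*}^S$ bijectively with some $t_i\in\mathcal{K}_{m^*}^S\setminus\mathcal{K}^*$, applies local optimality to each swap $(t_i,v_i)$, and uses diminishing returns on the addition side together with a telescoping inequality on the removal side to obtain $Z(\mathcal{K}_{m^*}^S\cup\mathcal{K}^*)\le 2Z(\mathcal{K}_{m^*}^S)$; monotonicity of $Z$ then gives $Z(\mathcal{K}^*)\le 2Z(\mathcal{K}_{m^*}^S)$, which unpacks to the advertised ratio bounds.

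The hard part will be bridging Algorithm~\ref{alg_HeuSupPhase2}'s $J$-based local optimality to $Z$-based local optimality in case (P1), since the shift $\mathcal{S}\mapsto\mathcal{S}\cup\{v^*\}$ only matches the two notions directly when $v^*\in\mathcal{K}_{m^*}^S$. I expect to handle this by either (a) arguing, via supermodularity of $J$ and the choice of $v^*$, that a local optimum may be taken (with suitable tie-breaking) to contain $v^*$, or (b) extending $Z$ to $2^{\mathcal{V}}$ and using Lemma~\ref{lemMatrixSupermodularity} to show that the $J$-swap inequality implies the needed $Z$-swap inequality after conditioning on $\{v^*\}$. The strictness of the ratio ($>\tfrac12$ rather than $\ge\tfrac12$) will follow because, under Assumptions~\ref{asmp_fixedGraph} and~\ref{asmp_alpha}, the rank-one update formula in Remark~\ref{remComplexityAlg1} shows that swap marginals involving any $v\in\mathcal{V}_\alpha$ are strictly positive, so at least one inequality in the $1/2$-bound chain is strict.
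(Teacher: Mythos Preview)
Parts (i) and (ii) match the paper exactly; in fact you give more detail than the paper does for (ii), which simply declares the monotonicity ``straightforward'' and notes that finiteness of the feasible family guarantees the existence of $m^*$.

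For part (iii) you take a different and longer route. The paper does not re-derive the pairing/telescoping argument you outline; it simply invokes \cite[Thm.~5.1]{Nemhauser78}, which states that for any nondecreasing submodular $Z$ with $Z(\varnothing)=0$, every interchange-locally-optimal set $\mathcal{S}^I$ of the problem $\max\{Z(\mathcal{S}):|\mathcal{S}|\le K\}$ satisfies
\[
\frac{Z(\mathcal{S}^*)-Z(\mathcal{S}^I)}{Z(\mathcal{S}^*)-Z(\varnothing)}\le\frac{K-1}{2K-1}.
\]
The paper then plugs in precisely the two normalizations you propose, $Z(\mathcal{S})=1-J^{(2)}(\mathcal{S})$ for (P2) and $Z(\mathcal{S})=J^{(1)}(\{v^*\})-J^{(1)}(\mathcal{S}\cup\{v^*\})$ on $\mathcal{V}_\alpha\setminus\{v^*\}$ for (P1). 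This citation buys two things over your approach: the proof collapses to a single line once $Z$ is fixed, and strictness comes for free because $\tfrac{K-1}{2K-1}<\tfrac12$ for every $K\ge 1$---so your separate argument via strict positivity of rank-one swap marginals is unnecessary.

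Your explicit worry about (P1)---that $J$-based $1$-swap optimality of $\mathcal{K}_{m^*}^S$ only translates directly into $Z$-based local optimality when $v^*\in\mathcal{K}_{m^*}^S$---is a genuine subtlety. The paper does not address it; it simply asserts that ``$\mathcal{K}_{m^*}^S$ is an interchange solution,'' so in this respect your proposal is more careful than the original. Of your two suggested fixes, route (a) is the right one to pursue and is what the paper implicitly relies on.
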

	\begin{proof}
		(i) Consider $\mathcal{K}_0^S = \varnothing$ and the first cycle, i.e., $m=1$. So, $\mathcal{T} {=} \varnothing$ and $S$ is initialized as empty. As a result, line~6 becomes: $ t_i^* = \arg\min_{v\in \mathcal{V}{\setminus}\mathcal{S}} J({ \mathcal{S} \cup \{v\} })$, which together with line~7 is the greedy algorithm \ref{alg_HeuSupPhase1}. Therefore, $\mathcal{K}_1^S \equiv \mathcal{K}^{G}$.
		
		(ii) 
		Existence of $m^*$ follows from the fact that the feasible set of $\mathcal{K}$ is finite. The rest is straightforward and thus is skipped. 
		
		(iii) For any submodular and nondecreasing function $Z(\mathcal{S})$, it follows from \cite[Thm.~5.1]{Nemhauser78} that $\frac{Z(\mathcal{S}^*) - Z(\mathcal{S}^I)}{Z(\mathcal{S}^*) - Z(\varnothing)} \le \frac{K-1}{2K-1} <\frac{1}{2},$ 
		where $\mathcal{S}^*$ and  $\mathcal{S}^I$ denote the optimal solution  and an interchange solution (i.e., no more possible local improvement) to the problem $\max \{Z(\mathcal{S}): \mathcal{S} \subseteq \mathcal{P}, |\mathcal{S}|\le K \}$. 
		Applying this result to our case, where $Z(\mathcal{S}) := J^{(1)}(\{v^*\}) - J^{(1)}(\mathcal{S}\cup \{v^*\}), \forall \mathcal{S} \subseteq \mathcal{V}_{\alpha}{\setminus}\{v^*\}$ for  (P1) or  $Z(\mathcal{S}) := 1 - J^{(2)}(\mathcal{S}), \forall \mathcal{S} \subseteq \mathcal{V}_{\alpha}$ for (P2), yields the desired results. Here, $\mathcal{K}_{m^*}^S$ is an interchange solution for each $\mathcal{K}_0^S \subseteq \mathcal{V}_{\alpha}$. 
	\end{proof}

	The ratio bound $\frac{1}{2}$ in part (iii) is less than the constant $R_{\sigma,K}$ in Theorem~\ref{thmRatioBoundAlg1} but holds for any initial set $\mathcal{K}_0^S$. 
	Note also that the first part of this proposition asserts that Algorithm \ref{alg_HeuSupPhase1} can be obtained from Algorithm \ref{alg_HeuSupPhase2} by letting $\mathcal{K}_0^S = \varnothing$ and $M = 1$. In this case, the performance of the latter algorithm is no worse than the former. In fact, it is clear from part (ii) that better estimates are  attained almost surely  when $M>1$. 
	Although we are not yet able to quantify this gain rigorously, our simulation results  illustrate radical improvement compared to Algorithm \ref{alg_HeuSupPhase1}, even with small values of $M$.

	\begin{rem}(\emph{On implementation of Alg. \ref{alg_HeuSupPhase2}}) First, the algorithm works for any choice of $\mathcal{K}_0^S$ and thus can also be used to improve upon a good starting set $\mathcal{K}_0^S$ available from, e.g., the convex relaxation approach or Algorithm \ref{alg_HeuSupPhase1}. Second, when a local minimizer $\mathcal{K}_{m^*}^S$ is found, there are practical techniques to possibly escape this local minimizer at the expense of more computation time and power; e.g., random  swapping of multiple nodes in $\mathcal{K}_{m^*}^S$ with $\mathcal{V}{\setminus} \mathcal{K}_{m^*}^S$. Finally, in our simulations even with a small $M$ (say 2-3), the algorithm still finds a good approximation, especially from a good starting point. This may be attributable to the ``diminishing returns" nature of $J(\mathcal{K})$ resulting in significant improvements only in the first few cycles.
	\end{rem}
	
	\vspace{-2mm}
	\section{Numerical Example} \label{secSimulations}

	%
	Consider a directed network based on the largest strongly connected component of the Wikipedia vote network\footnote{Data available at: http://snap.stanford.edu/data/wiki-Vote.html} 
	studied in \cite{Leskovec10}.
	Our network has $1300$ nodes and $39456$ edges. We generate the weight of each directed edge randomly in $(0,1)$. 
	Suppose that leader $Q$ has selected the set $\mathcal{V}_{\beta}$ containing the first $50$ nodes with the highest out-degrees and that $\beta_i = 10^6, \forall i\in \mathcal{V}_{\beta}$  (thus, they strongly support leader $Q$). Suppose that leader $T$ can connect to up to $K$ nodes in $\mathcal{V}_{\alpha}$, which contains the first $1000$ nodes (in terms of the numbering sequence of the nodes) that are not direct followers of $Q$. 
	We also assume that $\alpha_i = 10, \forall i\in \mathcal{V}_{\alpha}$. 
	%
	%
	%
	We consider problem (P2) for different values of $K \in [1,200]$  using various schemes\footnote{Our simulations were carried out in Matlab\textsuperscript{\textregistered} R2015b on a PC with Intel\textsuperscript{\textregistered} Core\texttrademark $i7$ CPU{@}3.10 GHz and 12 GB of RAM.} including the degree and page-rank heuristics and the following:
	\begin{itemize}
		\item[(i)]\underline{Algorithm~\ref{alg_HeuSupPhase1}}:  the greedy algorithm with output $\mathcal{K}^G$ providing $J_{GU} = J(\mathcal{K}^G)$ and $J_{GL} = 1 - \frac{1-J_{GU} }{R_{\sigma,K}}$ as upper and lower bounds on $J^*$; see \eqref{eqRatioBoundJ2}.
		\item[(ii)] \underline{(P$\_$Rlxd)+IPM}: (P$\_$Rlxd) solved by the IPM in OPTI toolbox \cite{Currie12},\footnote{Here, we let $\y^{(0)} {=} \0$ and stop the algorithm if ${|f_i \!-\! f_{i-1}|} \le 10^{-6}{|f_i|}$.} which gives $\bar{f}_{P\_Rlxd}$ and $f^*_{P\_Rlxd}$ as upper and lower bounds.
		\item[(iii)] \underline{(P$\_$Aprx)+IPM}: (P$\_$Aprx) solved by the IPM (with sparsity threshold set to $0.01$). The output, denoted by $\mathcal{K}^{P\_Aprx}$, yields corresponding cost $f_{P\_Aprx} \!=:\! J_{P\_Aprx}$, an upper bound on~$J^*$. 
		\item[(iv)] \underline{$\mathrm{GSwap}(\mathcal{K}^{P\_Aprx},1)$}: applying one cycle of the greedy swapping algorithm to $\mathcal{K}^{P\_Aprx}$ obtained from (iii). 
	\end{itemize}
	
	Fig.~\ref{fig_bounds_a10} shows the upper and lower bounds mentioned above. 
	Here, the upper bounds from the greedy algorithm and the $\mathrm{GSwap}(\mathcal{K}^{P\_Aprx},1)$ and (P$\_$Rlxd)+IPM schemes are very close, 
	while the convex relaxation approach gives the best lower bounds, which help in evaluating approximation errors. In particular, using these bounds, we are able to conclude that  the the approximation ratio of greedy solutions $\mathcal{K}^G$ (as well as that of $\mathrm{GSwap}(\mathcal{K}^{P\_Aprx},1)$ and (P$\_$Rlxd)+IPM) satisfies 
	$$\textstyle {(1-J(K^G))}/{(1-J^*)} \ge {(1-J_{GU})}/{(1-f^*_{P\_Rlxd})},$$ 
	where the right side (depicted by a dotted line in Fig.~\ref{fig_bounds_a10}), is clearly much higher than $R_{\sigma,K}$ (here $\sigma = 0.99$) and the well-known ratio $(1{-}\frac{1}{e}) = 63.21\%$ for the greedy algorithm. 
	E.g., our approximation ratio is at least $90\%$ for $K\ge 90$. 
	Regarding running time, the greedy algorithm scales almost linearly with $K\ll N$, while the convex  approach does not; see  Fig.~\ref{fig_cpu_time_a10}. As $\gamma$ increases, $|\mathcal{K}^{P\_Aprx}|$ decreases, and thus so does the running time of $\mathrm{GSwap}(\mathcal{K}^{P\_Aprx},1)$.

	\begin{figure}[t] 
		\centering
		\includegraphics[scale=0.675]{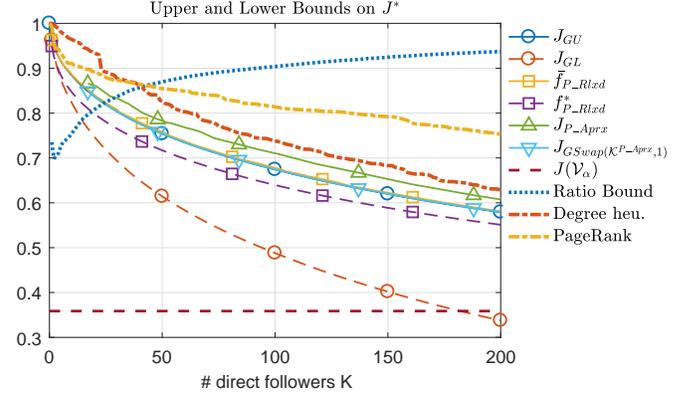}
		\caption{Upper bounds: solid and dash-dotted lines, lower bounds: dashed lines. The global lower bound $J(\mathcal{V}_{\alpha})$ holds for any $K$. The ratio bound ${(1{-}J_{GU})}/{(1{-}f^*_{P\_Rlxd})}$ (dotted line)  is at least $90\%$ as $K\ge 90$.}
		\vspace{-1mm}
		\label{fig_bounds_a10}
	\end{figure}
	\begin{figure}[t] 
		\centering
		\includegraphics[scale=0.625]{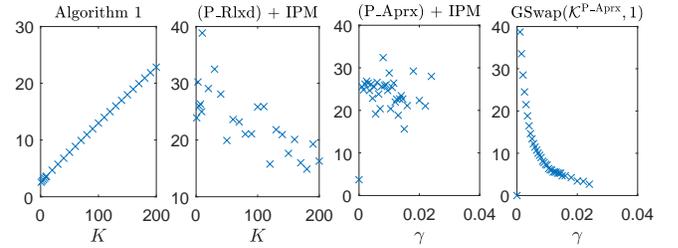}
		\vspace{-1mm}
		\caption{CPU run times ($s$). The IPM takes approximately $0.21 s$/iteration.}
		\vspace{-4mm}
		\label{fig_cpu_time_a10}
	\end{figure}

	\vspace{-1mm}
	\appendix
	\vspace{-1mm}
	\subsection{Known Matrix Results} \label{secMatrixResults}
	\vspace{-1mm}
	\begin{lem}\label{thmSpectralRadiusNonnegativeMat} \emph{(\cite[Thm. 8.3.1]{Horn85})} 
		If $A\in \Mn_+$, then $\rho(A)$ is an eigenvalue of $A$ and $\exists \x \in \mathbb{R}^N_+{\setminus} \{\0\}$ such that $A\x = \rho (A) \x.$
	\end{lem}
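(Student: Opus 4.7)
The plan is to establish this classical Perron--Frobenius-type result by first proving it for strictly positive matrices and then extending by a density/perturbation argument. The key quantity to work with is the Collatz--Wielandt functional
\begin{equation*}
r(\x) := \min_{i:\, x_i > 0} \frac{(A\x)_i}{x_i}, \qquad \x \in \mathbb{R}^N_+{\setminus}\{\0\},
\end{equation*}
which is scale-invariant and satisfies $r(\x) \leq \rho(A)$ whenever it is finite, since $A\x \geq r(\x)\x$ inductively implies $A^k\x \geq r(\x)^k\x$ and the Neumann-series argument forces $r(\x) \leq \rho(A)$.

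First, I would treat the case of a strictly positive matrix $B > \0$. Restricting $r$ to the unit simplex $\Delta := \{\x \geq \0 : \1\T\x = 1\}$, I would show that although $r$ need not be continuous at the boundary, the modified functional $\tilde r(\x) := r(B\x)$ is upper semicontinuous on $\Delta$ (since $B\x > \0$ there), and that $\sup_{\x \in \Delta} r(\x) = \sup_{\x \in \Delta} \tilde r(\x)$. By compactness of $\Delta$, a maximizer $\x^*$ exists, and one shows $\x^*$ is a Perron eigenvector by contradiction: if $B\x^* - r(\x^*)\x^*$ were nonzero and nonnegative, then applying $B$ would make it strictly positive, giving $r(B\x^*) > r(\x^*)$, a contradiction. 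The resulting eigenvalue equals $\rho(B)$ in view of the upper bound noted above.

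For a general $A \in \mathbb{R}^{N\times N}_+$, I would use the perturbation $A_k := A + \tfrac{1}{k}\1\1\T > \0$. By the positive case, each $A_k$ admits a Perron pair $(\rho_k, \x_k)$ with $\x_k \in \Delta$. Compactness of $\Delta$ gives a subsequence $\x_{k_j} \to \x^* \in \Delta$, so $\x^* \geq \0$ and $\x^* \neq \0$. Continuity of the spectral radius under entrywise convergence (which follows from continuous dependence of roots of the characteristic polynomial on its coefficients, together with the monotonicity $\rho(A_k) \geq \rho(A)$ and an upper bound via $\|A_k\|_\infty$) yields $\rho_{k_j} \to \rho(A)$. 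Passing to the limit in $A_{k_j}\x_{k_j} = \rho_{k_j}\x_{k_j}$ delivers $A\x^* = \rho(A)\x^*$, as required.

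The main obstacle will be the positive case: handling the discontinuity of $r$ on the boundary of $\Delta$ and showing rigorously that the maximum of $r$ is attained and coincides with a genuine eigenvalue--eigenvector pair. The extension step is standard once one has continuity of $\rho$ in the matrix entries. Since the statement is quoted verbatim from Horn and Johnson and used only as a black box in the paper's development (to bound quadratic forms by $\rho(\mathbf{H}(\y))\x\T\x$ for nonnegative symmetric $\mathbf{H}(\y)$), I would simply cite Theorem~8.3.1 of \cite{Horn85} rather than reproduce the proof.
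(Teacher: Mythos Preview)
Your proposal is correct, and your closing recommendation---to simply cite \cite[Thm.~8.3.1]{Horn85} rather than reproduce the argument---is exactly what the paper does: the lemma appears in Appendix~\ref{secMatrixResults} under ``Known Matrix Results'' with no proof whatsoever, only the citation. Your Collatz--Wielandt sketch is a sound outline of the standard Perron--Frobenius proof, but it goes well beyond anything the paper provides; the paper treats this purely as a black-box tool (used once, to justify the bound $\x\T\mathbf{H}(\y)\x \le \rho(\mathbf{H}(\y))\x\T\x$ in \eqref{eqHessianStrictPosG}).
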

	
	\begin{lem}\label{thmSpectralRadiusBounds}\emph{(\cite[Thm. 8.1.18]{Horn85}) }
		Let $A,B \in \Mn$. If $|A|\le B$, then $\rho(A) \le \rho(|A|) \le \rho(B)$. 
	\end{lem}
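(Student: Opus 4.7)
The plan is to prove the two inequalities $\rho(A)\le\rho(|A|)$ and $\rho(|A|)\le\rho(B)$ separately, in both cases using Gelfand's spectral radius formula $\rho(M)=\lim_{k\to\infty}\|M^k\|^{1/k}$ together with a matrix norm that is monotone on the cone of entrywise-nonnegative matrices. A convenient choice is the induced $\infty$-norm (maximum absolute row sum), since for any $X,Y\in\mathbb{R}^{N\times N}_+$ with $X\le Y$ one has $\|X\|_\infty\le\|Y\|_\infty$ directly from the definition.

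For the first inequality, the key auxiliary claim is the entrywise bound $|A^k|\le|A|^k$ for every $k\ge 1$. This is proved by induction: the base case is trivial, and the inductive step uses the triangle inequality applied entrywise to the product, namely $|[A^{k+1}]_{ij}|=|\sum_\ell [A^k]_{i\ell}A_{\ell j}|\le\sum_\ell |[A^k]_{i\ell}|\,|A_{\ell j}|\le[|A|^k|A|]_{ij}=[|A|^{k+1}]_{ij}$. Combining this with monotonicity of $\|\cdot\|_\infty$ on nonnegative matrices yields $\|A^k\|_\infty\le\||A^k|\|_\infty\le\||A|^k\|_\infty$, where the first step uses that $\|M\|_\infty=\||M|\|_\infty$ for any matrix $M$. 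Taking $k$-th roots and letting $k\to\infty$ gives $\rho(A)\le\rho(|A|)$ by Gelfand's formula.

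For the second inequality, I would show by induction that $|A|\le B$ implies $|A|^k\le B^k$ entrywise for every $k\ge 1$: given $|A|^k\le B^k$, and since both $|A|$ and $B$ are nonnegative with $|A|\le B$, nonnegativity preserves the order under multiplication on each side, so $|A|^{k+1}=|A|^k\cdot|A|\le B^k\cdot|A|\le B^k\cdot B=B^{k+1}$. Then monotonicity of $\|\cdot\|_\infty$ on nonnegative matrices yields $\||A|^k\|_\infty\le\|B^k\|_\infty$, and Gelfand again delivers $\rho(|A|)\le\rho(B)$.

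The only subtle point I anticipate is making sure the norm used really is monotone on the nonnegative cone and satisfies $\|M\|=\||M|\|$ for a general (possibly complex) matrix $M$; the induced $\infty$-norm (or induced $1$-norm) handles both cleanly, so the argument is essentially routine once the two entrywise inequalities $|A^k|\le|A|^k$ and $|A|^k\le B^k$ are in hand. No Perron--Frobenius machinery beyond existence of the spectral limit is needed.
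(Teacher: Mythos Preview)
Your proof is correct and is essentially the standard textbook argument (this is, in fact, how Horn and Johnson prove it in the cited reference). Note, however, that the paper does not give its own proof of this lemma: it is listed in Appendix~\ref{secMatrixResults} among ``Known Matrix Results'' with only a citation to \cite[Thm.~8.1.18]{Horn85}, so there is nothing in the paper to compare your argument against.
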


	\begin{lem}\label{thm_inverseMmatrix}\emph{(\cite{McDonald95}) }
		Let $P \in \Mn$ be the inverse of a nonsingular M-matrix. Then $P \ge 0$ and $P_{jk} \ge P_{ji} P_{ii}^{-1} P_{ik}, \forall i,j,k =1, \ldots, N.$
	\end{lem}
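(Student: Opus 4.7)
The plan is to establish the two claims separately, with the nonnegativity of $P$ following immediately from the standard Neumann series expansion of an M-matrix inverse, and the path-product inequality $P_{jk}P_{ii} \ge P_{ji}P_{ik}$ following from a walk-sum interpretation of $P$.

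For nonnegativity, since $A$ is a nonsingular M-matrix, by definition write $A = sI - B$ with $B \ge \0$ and $\rho(B) < s$. Then $P = A^{-1} = s^{-1}\sum_{k \ge 0}(B/s)^k$ is a convergent nonnegative sum of nonnegative matrices, so $P \ge \0$. This is essentially the same expansion used in the proof of Theorem~\ref{thmConvexity_G} and Lemma~\ref{lemPVPVP}.

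For the path-product inequality, set $M := B/s$, so $M \ge \0$ with $\rho(M) < 1$, and $sP = G := (I-M)^{-1} = \sum_{k \ge 0} M^k$. Each entry $G_{jk}$ is the total weight of walks from $j$ to $k$ in the weighted directed graph defined by $M$, where a walk's weight is the product of its edge weights (this is the same combinatorial viewpoint used in the proof of Lemma~\ref{lemPVPVP}). The target inequality rescales to $G_{jk}G_{ii} \ge G_{ji}G_{ik}$. I would prove this by a first-passage decomposition: let $F_{ji}$ denote the total weight of walks from $j$ to $i$ that touch $i$ only at the terminal step. By splitting each walk $j \to i$ at its first visit to $i$ and concatenating with an arbitrary loop at $i$, one obtains the identity $G_{ji} = F_{ji}\, G_{ii}$, so $F_{ji} = G_{ji}/G_{ii}$. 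Applying the same split to walks from $j$ to $k$ that pass through $i$, one gets a contribution $F_{ji}\, G_{ik}$ to $G_{jk}$; since the remaining walks (those avoiding $i$) contribute a nonnegative amount, $G_{jk} \ge F_{ji}\, G_{ik} = G_{ji}\,G_{ik}/G_{ii}$. Rescaling by $s^2$ gives the claim.

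The main obstacle is making the first-passage decomposition fully rigorous, in particular handling the edge cases $j = i$, $k = i$, or $j = k$ (where the inequality is either trivial or a tautology), and verifying that the walk generating series converge absolutely so that rearrangements are legal, which is guaranteed by $\rho(M) < 1$. An alternative purely algebraic route would invoke Jacobi's identity for minors of the inverse, reducing the $2\times 2$ minor $P_{ii}P_{jk} - P_{ji}P_{ik}$ to a signed almost-principal minor of $A$; positivity of that minor would then need to be deduced from M-matrix theory (e.g., via Schur complements, noting that Schur complements of M-matrices are M-matrices so have positive determinants). The walk-sum route is cleaner and dovetails with the machinery already developed in the paper, so I would favor it.
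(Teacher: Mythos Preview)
The paper does not actually prove this lemma: it is listed in Appendix~\ref{secMatrixResults} among the ``Known Matrix Results'' and simply attributed to \cite{McDonald95} without argument. So there is no paper proof to compare against; your proposal supplies what the paper deliberately omits.

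Your argument is correct. The nonnegativity of $P$ via the Neumann series is standard, and your first-passage decomposition for the path-product inequality is sound: with $G=(I-M)^{-1}$ and $G_{ii}\ge 1>0$, the identity $G_{ji}=F_{ji}G_{ii}$ together with $G_{jk}\ge F_{ji}G_{ik}$ (obtained by discarding the nonnegative contribution of $j\to k$ walks avoiding $i$) gives exactly $G_{jk}G_{ii}\ge G_{ji}G_{ik}$. The edge cases $j=i$ and $k=i$ collapse to equalities, and absolute convergence of all the walk-sum series is guaranteed by $\rho(M)<1$, so the rearrangements are legitimate. The walk-sum framing meshes nicely with the machinery already set up in Lemma~\ref{lemPVPVP}. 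The Schur-complement alternative you mention is closer in spirit to how results of this type are typically established in the M-matrix literature (and likely to the cited source), but your combinatorial route is cleaner in this paper's context and requires no additional imported facts.
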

	
	\begin{lem}\label{thmWoodbury}\emph{(\cite[p.~258]{Higham02})}
		Let $A \!\in\! \mathbb{R}^{n\times n}, B \!\in\! \mathbb{R}^{n\times r}, C \!\in\! \mathbb{R}^{r\times r}$, $D \!\in\! \mathbb{R}^{r\times n}.$ The following holds when the indicated inverses exist
		\begin{equation}  
		(A \!-\! BC^{-1}D)^{-1} \!=\! A^{-1} \!+\! A^{-1}B(C \!-\! DA^{-1}B)^{-1}DA^{-1}\label{eqWoodbury}
		\end{equation} 
	\end{lem}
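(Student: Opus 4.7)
The plan is to verify the Woodbury identity \eqref{eqWoodbury} directly by left-multiplication: I would show that $(A - BC^{-1}D)$ times the claimed right-hand side equals $I$, which (together with the hypothesis that all indicated inverses exist, hence $A-BC^{-1}D$ is nonsingular) establishes the identity. This is the cleanest route because the statement is purely algebraic and requires no structural assumptions on the blocks beyond invertibility where written.

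Concretely, I would expand
\begin{equation*}
(A - BC^{-1}D)\bigl[A^{-1} + A^{-1}B(C - DA^{-1}B)^{-1}DA^{-1}\bigr]
\end{equation*}
into four terms: $I$, $B(C - DA^{-1}B)^{-1}DA^{-1}$, $-BC^{-1}DA^{-1}$, and $-BC^{-1}DA^{-1}B(C - DA^{-1}B)^{-1}DA^{-1}$. The key algebraic move is to combine the second and fourth terms by factoring out $BC^{-1}$ on the left, using the identity $B - BC^{-1}DA^{-1}B = BC^{-1}(C - DA^{-1}B)$. Once the factor $(C - DA^{-1}B)$ meets its inverse, these two terms collapse to $BC^{-1}DA^{-1}$, which cancels the third term and leaves just $I$. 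Uniqueness of the inverse then means we do not need to check the product on the other side.

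An alternative I would note is the Schur-complement approach: inverting the block matrix $M = \bigl(\begin{smallmatrix} A & B \\ D & C \end{smallmatrix}\bigr)$ in two different ways, once by eliminating the $(2,2)$ block and once by eliminating the $(1,1)$ block, yields two expressions for the same $(1,1)$ entry of $M^{-1}$, whose equality is precisely \eqref{eqWoodbury}. This is more conceptual, but requires separately justifying invertibility of both Schur complements, so for a self-contained appendix lemma I would prefer the direct verification above.

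The only real obstacle is bookkeeping: each of the four intermediate terms is a product of up to five matrices, and the cancellation must be tracked carefully in the correct order since the blocks need not commute. Because no spectral, sign, or symmetry hypothesis is needed, there is no delicate case analysis, which is why this result is used here as a black-box identity for the rank-one and rank-two update formulas exploited in Remarks~\ref{remComplexityAlg1} and~\ref{remComplexityAlg2}.
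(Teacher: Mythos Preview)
Your direct-verification argument is correct and is the standard way to establish \eqref{eqWoodbury}. However, the paper does not actually prove this lemma: it is listed in Appendix~\ref{secMatrixResults} under ``Known Matrix Results'' with a citation to \cite[p.~258]{Higham02} and no accompanying proof, so there is no in-paper argument to compare against. Your proposal would serve perfectly well if a self-contained proof were desired.
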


	\subsection{Proof of Lemma \ref{lemMatrixSupermodularity}} \label{proofMatrixSupermodularity}
	First, we show that  $(L_{\beta} + \Gamma_\mathcal{S})^{-1}$ is nonincreasing in $\mathcal{S}$. Let $D_{\mathcal{S}} = \diag(W\1 + \boldsymbol\beta + \boldsymbol\alpha_{\mathcal{S}})$ and note that $\rho \big( D_{\mathcal{S}}^{-1}W \big) < 1$ (cf. Lemma \ref{lemStability}). 
	Thus, $ (I- D_{\mathcal{S}}^{-1}W)^{-1}= \sum_{i=0}^{\infty} (D_{\mathcal{S}}^{-1}W)^i,$ 
	and 
	\begin{align}
	(L_{\beta} + \Gamma_\mathcal{S})^{-1} 
	= (D_{\mathcal{S}} -W)^{-1} 
	= \textstyle\sum_{i\ge 0} (D_{\mathcal{S}}^{-1}W)^i D_{\mathcal{S}}^{-1} \label{eqMatrixMonotone}
	\end{align}
	which is clearly nonnegative. Moreover, for any $\mathcal{S} \subseteq \mathcal{T} \subseteq \mathcal{V}$, we have $\0 \le D_{\mathcal{T}}^{-1} \le D_{\mathcal{S}}^{-1}$, which together with \eqref{eqMatrixMonotone} implies that 
	$(L_{\beta} + \Gamma_\mathcal{T})^{-1} \le (L_{\beta} + \Gamma_\mathcal{S})^{-1}.$ 
	This proves the second inequality in \eqref{eqMatrixSupermodular}. 
	Next, we prove the first inequality in \eqref{eqMatrixSupermodular}.  
	By letting $P\! = (L_\beta\!+\! \Gamma_\mathcal{S})^{-1}$, $Q\!= (L_\beta \!+\! \Gamma_{\mathcal{S}\cup \{k\}})^{-1}$ and using  \eqref{eqWoodbury}, this is equivalent to proving that
	$	 P_{(v)}P^{(v)}(\alpha_v^{-1} + P_{vv})^{-1} 	\geq
	Q_{(v)}Q^{(v)}(\alpha_v^{-1} + Q_{vv})^{-1}.
	$
	We will show that this holds element-wise, i.e., for $\forall i,j\in \mathcal{V}$
	\begin{align}
	{P_{iv}P_{vj}}{(\alpha_v^{-1} + P_{vv})^{-1}}  \geq
	{Q_{iv}Q_{vj}}{(\alpha_v^{-1} + Q_{vv})^{-1}}.
	\label{eq_JSupermodular2}
	\end{align}
	By \eqref{eqWoodbury}, we have 
	$	Q_{ij} = P_{ij} - P_{ik}P_{kj}/(\alpha_k^{-1} + P_{kk}), \forall i,j\in \mathcal{V}.$
	Thus, \eqref{eq_JSupermodular2} is equivalent to 
	$\frac{\alpha_v^{-1} + Q_{vv}}{\alpha_v^{-1} + P_{vv}}P_{iv}P_{vj}
	\geq
	(	P_{iv} \!-\! \frac{P_{ik}P_{kv}}{\alpha_k^{-1} + P_{kk}} ) 	
	(	P_{vj} \!-\! \frac{P_{vk}P_{kj}}{\alpha_k^{-1} + P_{kk}}  )$
	or, by rearranging terms, this is
	\begin{align}
	\textstyle \frac{P_{vk}P_{kv}P_{iv}P_{vj}}{(\alpha_v^{-1} + P_{vv})}  
	{+} \frac{P_{ik}P_{kv}P_{vk}P_{kj}}{(\alpha_k^{-1} + P_{kk})} 
	\le
	P_{ik}P_{kv}P_{vj}
	+ P_{iv}P_{vk}P_{kj}.\nnb	
	\end{align}
	To show this, 
	first note that $P$ is the inverse of a nonsingular M-matrix. Thus, by Lemma \ref{thm_inverseMmatrix} and the fact that $\alpha_v^{-1}\ge 0$, we have $P_{ik} \ge P_{iv}P_{vk}/P_{vv} \ge P_{iv}P_{vk}/(\alpha_v^{-1} + P_{vv})$. 
	Next, multiplying both sides of this relation with $P_{kv}P_{kj}\ge 0$ yields 
	$P_{vk}P_{kv}P_{iv}P_{vj}/(\alpha_v^{-1} + P_{vv}) \le P_{ik}P_{kv}P_{vj}.$ 
	Similarly, we have 
	$P_{ik}P_{kv}P_{vk}P_{kj}/(\alpha_k^{-1} + P_{kk}) \le P_{iv}P_{vk}P_{kj}.$ 
	%
	Adding the last two relations yields the desired result. \hfill \qed

	\vspace{-2mm}
	\subsection{Proof of Lemma \ref{lemComposition}} \label{proofComposition}
	Let $\phi = f\circ F$. We will show that $\phi(\mathcal{S}) + \phi(\mathcal{T}) \le \phi(\mathcal{S}\cup \mathcal{T}) + \phi(\mathcal{S} \cap \mathcal{T})$ for any $\mathcal{S} \subseteq \mathcal{T} \subseteq \mathcal{V}$. 
	First, since $F$ is decreasing, we have 
	$F(\mathcal{S}\!\cup\! \mathcal{T}) \!\le\! \min\{F(\mathcal{S}),F(\mathcal{T})\} \!\le\! \max\{F(\mathcal{S}),F(\mathcal{T})\} \!\le\! F(\mathcal{S}\!\cap\! \mathcal{T}).$ 
	As a result, $\phi(\mathcal{S}\cup \mathcal{T}) = f\big( F(\mathcal{S}\cup \mathcal{T}) \big) \le  f\big( F(\mathcal{S} \cap \mathcal{T}) \big) = \phi(\mathcal{S} \cap \mathcal{T})$ since $f$ is increasing. This proves that $\phi$ is nonincreasing. 
	
	Next, we have that  there exist $a_1, a_2 \in [0,1]$ such that 
	\begin{align}
	F(\mathcal{S}) &= a_1 F(\mathcal{S}\cup \mathcal{T}) + (1-a_1) F(\mathcal{S}\cap \mathcal{T}) \label{FS}\\
	F(\mathcal{T}) &= a_1 F(\mathcal{S}\cup \mathcal{T}) + (1-a_2) F(\mathcal{S}\cap \mathcal{T}). \label{FT}
	\end{align}
	Adding these equations gives $
	F(\mathcal{S}) + F(\mathcal{T}) 
	= (a_1 + a_2) F(\mathcal{S}\cup \mathcal{T}) + (2-a_1-a_2) F(\mathcal{S}\cap \mathcal{T})$, 
	whose left side is less than $F(\mathcal{S}\!\cup\! \mathcal{T}) \!+\! F(\mathcal{S}\!\cap\! \mathcal{T})$ by supermodularity of $F$. Thus
	$(a_1 + a_2) F(\mathcal{S}\cup \mathcal{T}) + (2-a_1-a_2) F(\mathcal{S}\cap \mathcal{T}) \le F(\mathcal{S}\cup \mathcal{T}) + F(\mathcal{S}\cap \mathcal{T}).$ Rearranging terms, we have 
	$(1-a_1 -a_2)(F(\mathcal{S}\cap \mathcal{T}) - F(\mathcal{S}\cup \mathcal{T})) \le \0$
	which together with monotonicity of $F$ yields  
	$ a_1 + a_2 \ge 1.$ 
	Now by convexity of $f$ and \eqref{FS} we have $\phi(\mathcal{S}) {=} f\big(F(\mathcal{S}) \big) {=} f\big( a_1 F(\mathcal{S}\cup \mathcal{T}) + (1-a_1) F(\mathcal{S}\cap \mathcal{T})  \big) 
	\le a_1 f\big(F(\mathcal{S}\cup \mathcal{T})\big) + (1-a_1) f\big( F(\mathcal{S}\cap \mathcal{T})  \big)$. 
	So, $\phi(\mathcal{S}) \le a_1 \phi(\mathcal{S}\cup \mathcal{T}) + (1-a_1)\phi(\mathcal{S} \cap \mathcal{T})$. 
	Similarly, convexity of $f$ and \eqref{FT} imply $\phi(\mathcal{T}) \le a_2 \phi(\mathcal{S}\cup \mathcal{T}) + (1-a_2)\phi(\mathcal{S} \cap \mathcal{T}).$ Thus, 
	\begin{align}
	&\phi(\mathcal{S}) + \phi(\mathcal{T}) 
	\le (a_1 \!+\! a_2) \phi(\mathcal{S}\cup \mathcal{T}) \!+\! (2\!-\!a_1\!-\!a_2)\phi(\mathcal{S} \cap \mathcal{T}) \nnb\\
	&=  \phi(\mathcal{S}\cup \mathcal{T}) + \phi(\mathcal{S} \cap \mathcal{T}) +(a_1+a_2-1)\big[ \phi(\mathcal{S}\cup \mathcal{T}) - \phi(\mathcal{S} \cap \mathcal{T}) \big] \nnb
	\end{align}
	Since  $a_1+a_2 \ge 1$ and $\phi$ is nonincreasing, we conclude that $\phi(\mathcal{S}) + \phi(\mathcal{T}) \le \phi(\mathcal{S}\cup \mathcal{T}) + \phi(\mathcal{S} \cap \mathcal{T})$ as desired.\hfill \qed%
	
	\vspace{-2mm}
	\section*{Acknowledgment}	
    The authors are grateful to Prof. Terence Tao for providing the approach used in the proof of Lemma~\ref{lemPVPVP}. This work was supported in part 
	by the Air Force Office of Scientific Research through MURI AFOSR Grant {\#FA9550-09-1-0538}.%
	
	\vspace{-2mm}
	\bibliographystyle{IEEEtran}
	\bibliography{IEEEabrv,RefInfluenceAbrv,RefOptim1,RefProposal}

\end{document}